\newcommand\junk[1]{}
\def\ps@pprintTitle{%
  \let\@oddhead\@empty
  \let\@evenhead\@empty
  \def\@oddfoot{\reset@font\hfil\thepage\hfil}
  \let\@evenfoot\@oddfoot
}
\newtheorem{theorem}{Theorem}[section]
\newtheorem{prop}[theorem]{Proposition}
\newtheorem{lemma}[theorem]{Lemma}
\newtheorem{observation}[theorem]{Observation}
\newtheorem{conjecture}[theorem]{Conjecture}
\def\I{\mathcal{I}}
\def\J{\mathcal{J}}
\def\bound{24}
\begin{document}
\begin{frontmatter}
\title{Navigating Between Packings of Graphic Sequences\tnoteref{t1}}
\author[renyi]{P\'eter L. Erd\H os\fnref{elp}}
\address[renyi]{Alfr\'ed R{\'e}nyi Institute, Re\'altanoda u 13-15 Budapest, 1053 Hungary\\  {\tt email:} erdos.peter@renyi.mta.hu}
\author[denver]{Michael Ferrara\fnref{mjf}}
\author[denver]{Stephen G. Hartke\fnref{sgh}}
\address[denver]{Department of Mathematical and Statistical Sciences, University of Colorado Denver,US\\
{\tt email:} $<$michael.ferrara; stephen.hartke$>$@ucdenver.edu}
\fntext[elp]{PLE was supported in part by the National Research, Development and Innovation Office -- NKFIH grant K~116769 and SNN~116095.}
\fntext[sgh]{SGH was supported in part by a Collaboration Grant from the Simons Foundation (\#316262 to Stephen G. Hartke).}
\fntext[mjf]{MF was supported in part by Collaboration Grants from the Simons Foundation (\#206692 and \#426971 to Michael Ferrara).}
\tnotetext[t1]{The authors express their gratitude to Tam\'as R. Mezei (R\'enyi Institute, Budapest) for his help in preparing the manuscript.}

\begin{abstract}
Let $\pi_1=(d_1^{(1)}, \ldots,d_n^{(1)})$ and $\pi_2=(d_1^{(2)},\ldots,d_n^{(2)})$ be graphic sequences. We say they \emph{pack} if there exist edge-disjoint realizations $G_1$ and $G_2$ of $\pi_1$ and $\pi_2$, respectively, on vertex set $\{v_1,\dots,v_n\}$ such that for $j\in\{1,2\}$, $d_{G_j}(v_i)=d_i^{(j)}$ for all $i\in\{1,\ldots,n\}$. In this case, we say that $(G_1,G_2)$ is a $(\pi_1,\pi_2)$-\textit{packing}.

A clear necessary condition for graphic sequences $\pi_1$ and $\pi_2$ to pack is that $\pi_1+\pi_2$, their componentwise sum, is also graphic.  It is known, however, that this condition is not sufficient, and furthermore that the general problem of determining if two sequences pack is $NP$-complete.  S.~Kundu proved in 1973 that if $\pi_2$ is almost regular, that is each element is from $\{k-1, k\}$, then $\pi_1$ and $\pi_2$ pack if and only if $\pi_1+\pi_2$ is graphic.

In this paper we will consider graphic sequences $\pi$  with the property that $\pi+\mathbf{1}$ is graphic. By Kundu's theorem, the sequences $\pi$ and $\mathbf{1}$ pack, and there exist edge-disjoint realizations $G$ and $\mathcal{I}$, where $\mathcal{I}$ is a 1-factor. We call such a $(\pi,\mathbf{1})$ packing a {\em Kundu realization}.

Assume that $\pi$ is a graphic sequence, in which each term is at most $n/\bound$, that packs with $\mathbf{1}$. This paper contains two results. On one hand, any two Kundu realizations of the degree sequence $\pi+\mathbf{1}$ can be transformed into each other through a sequence of other Kundu realizations by swap operations. On the other hand, the same conditions ensure that any particular 1-factor can be part of a Kundu realization of $\pi+\mathbf{1}$.
\end{abstract}
\begin{keyword}
graphic degree sequence \sep swap operation \sep packing graphic sequences \sep Kundu's theorem
\end{keyword}
\end{frontmatter}
\section{Introduction}
A nonnegative integer sequence $\pi$ is \textit{graphic} if it is the degree sequence of some simple graph $G$ on vertex set $V=\{v_1,\dots,v_n\}$. Unless explicitly stated, we do not assume that the elements of $\pi$ are ordered in any particular way. In this case we say that $G$ \textit{realizes} or is a \textit{realization} of $\pi$.  Graphic sequences $\pi_1=\left (d_1^{(1)}, \ldots,d_n^{(1)}\right )$ and $\pi_2=\left (d_1^{(2)}, \ldots,d_n^{(2)}\right )$ \emph{pack} if there exist edge-disjoint realizations $G_1$ and $G_2$ of $\pi_1$ and $\pi_2$, respectively,  such that for $j\in\{1,2\}$, $d_{G_j}(v_i)=d_i^{(j)}$ for all $i\in\{1,\ldots,n\}$. In this case, we say that $(G_1,G_2)$ is a $(\pi_1,\pi_2)$-\textit{packing}.


The following observation clearly holds when packing graphic sequences:
\begin{observation}\label{Th:trivi}
Assume that graphic sequences $\pi_1$ and $\pi_2$ pack. Then their componentwise sum $\pi=\pi_1+\pi_2$ is also graphic.
\end{observation}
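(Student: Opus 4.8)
The plan is to exhibit an explicit realization of $\pi$ built directly from a packing. Suppose $(G_1,G_2)$ is a $(\pi_1,\pi_2)$-packing on the common vertex set $V=\{v_1,\dots,v_n\}$, so that $G_1$ and $G_2$ are simple, edge-disjoint, and satisfy $d_{G_j}(v_i)=d_i^{(j)}$ for $j\in\{1,2\}$ and all $i\in\{1,\dots,n\}$. I would form the graph $G$ on vertex set $V$ whose edge set is $E(G_1)\cup E(G_2)$, and then verify that $G$ is a simple realization of $\pi$.

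The verification is immediate. Since $G_1$ and $G_2$ are themselves simple and, by the definition of packing, share no edge, the union $E(G_1)\cup E(G_2)$ contains no loops and no repeated edge; hence $G$ is a simple graph. Moreover, because the two edge sets are disjoint, each vertex $v_i$ is incident in $G$ to exactly the edges at $v_i$ in $G_1$ together with the edges at $v_i$ in $G_2$, so $d_G(v_i)=d_{G_1}(v_i)+d_{G_2}(v_i)=d_i^{(1)}+d_i^{(2)}$, which is precisely the $i$th term of $\pi=\pi_1+\pi_2$. Thus $G$ realizes $\pi$, so $\pi$ is graphic. There is no genuine obstacle here: the whole content of the statement is that edge-disjointness is exactly what guarantees both that degree counts add componentwise and that the union remains simple.
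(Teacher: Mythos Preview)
Your proof is correct and is exactly the obvious argument the paper has in mind: the observation is stated without proof, introduced only with ``the following observation clearly holds,'' and the implicit reasoning is precisely that the edge-disjoint union $G_1\cup G_2$ is a simple realization of $\pi_1+\pi_2$.
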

\noindent However it is well known that this condition is not sufficient.

Let $G$ be a fixed but otherwise arbitrary realization of $\pi$. The decision problem of whether $G$ contains a subgraph $G_1$ with degree sequence $\pi_1$ is known to be solvable efficiently---see Tutte, 1954 (\cite{T54}) and Edmonds, 1965 (\cite{E1965a}). On the other hand, the general problem of determining if two sequence pack is \textit{NP}-complete (see D\"{u}rr, Gui\~{n}ez and Matamala, 2012 \cite{NP}).

Although the complexity of the sequence packing problem was unknown until recently, the problem has been studied in specific contexts for some time. Let $\mathbf{k}_n$ denote the sequence in which every term is $k$, which is well-known to be graphic if the sum of the elements of the sequence is even and $k\le n-1$. Going forward, we will suppress the subscript of $n$ when the context is clear.  In 1970 Gr\"unbaum conjectured in \cite{grunbaum} that if $\pi_2$ is the sequence $\mathbf{1}$ consisting of all 1's, then the necessary condition in Observation~\ref{Th:trivi} is also sufficient. This was subsequently generalized by Rao and Rao:
\begin{conjecture}[$k${\bf -factor Conjecture}, A.R. Rao and S.B. Rao, 1972 \cite{rao}]
Graphic degree sequences $\pi_1$ and $\mathbf{k}$ pack if and only if $\pi_1+\mathbf{k}$ is graphic.
\end{conjecture}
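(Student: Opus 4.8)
\medskip\noindent
Here is the plan. The ``only if'' direction is just Observation~\ref{Th:trivi}, so all of the content sits in the ``if'' direction: assuming $\pi_1$ and $\pi_1+\mathbf{k}$ are both graphic, I must exhibit edge-disjoint realizations of $\pi_1$ and of $\mathbf{k}$ on a common vertex set. The quickest route is to observe that $\mathbf{k}$ is (trivially) almost regular --- every entry lies in $\{k-1,k\}$ --- so the statement is a special case of Kundu's theorem as quoted above and may simply be cited. For a self-contained proof I would argue as follows.

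First I would reformulate the goal. A realization of $\mathbf{k}$ is exactly a $k$-regular graph, and if $H$ is any realization of $\sigma:=\pi_1+\mathbf{k}$ containing a $k$-factor $F$, then $H-F$ realizes $\sigma-\mathbf{k}=\pi_1$ and is edge-disjoint from $F$; conversely any $(\pi_1,\mathbf{k})$-packing assembles into such a pair $(H,F)$. So the claim is equivalent to: if $\pi_1$ and $\pi_1+\mathbf{k}$ are graphic, then \emph{some} realization of $\pi_1+\mathbf{k}$ has a $k$-factor. I would establish this by an extremal swap argument (alternatively by induction on $k$, the base $k=1$ being a perfect-matching statement provable by a Berge-type augmenting-path argument). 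Among all pairs $(H,F)$ with $H$ a realization of $\sigma$ and $F\subseteq H$ a spanning subgraph of maximum degree at most $k$, pick one in which $|E(F)|$ is as large as possible. If $F$ is not $k$-regular, pick a vertex $u$ with $d_F(u)<k$; since $d_H(u)=(\pi_1)_u+k\ge k$, $u$ meets an edge of $H$ outside $F$, and maximality of $|E(F)|$ forces every $H$-neighbour of $u$ reached by a non-$F$ edge to be $F$-saturated. I would then chase the resulting alternating structure inside $F$, either uncovering an augmenting path for the ``$k$-capacitated subgraph'' $F$ (contradicting maximality) or, when no such path exists, using the fact that $\pi_1=\sigma-\mathbf{k}$ is itself graphic to produce a swap on the edges of $H$ that keeps $H$ simple, keeps $d_H=\sigma$, and creates the missing augmenting path --- again contradicting maximality.

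The hard part will be exactly that last exchange step: proving that whenever the ``$k$-capacitated subgraph'' $F$ is maximum in the current $H$ but not a $k$-factor, there is always a legal swap on $H$ that (i) preserves simplicity, (ii) preserves the degree sequence $\sigma$, and (iii) genuinely improves the situation, with no new deficiency appearing elsewhere. This needs a delicate local analysis around $u$ and the $F$-saturated vertices near it --- organized, say, through a Tutte--Berge-type barrier for $b$-matchings restricted to $H$ --- and it is the technical heart of Kundu's original argument. One alternative I would try is to realize $\pi_1$ by some $G_1$ and seek a $k$-factor in the complement $\overline{G_1}$, equivalently to invoke the Lov\'asz $(g,f)$-factor theorem on a symmetrized bipartite (Gale--Ryser) model; but transporting the factor obtained there back to a \emph{simple} realization of $\sigma$ still requires the same exchange lemma, so this does not really sidestep the difficulty.
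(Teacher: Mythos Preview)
The paper does not give its own proof of this conjecture: it simply remarks that Kundu's theorem (Theorem~\ref{th:kundu}) establishes a more general statement, of which the $k$-factor Conjecture is the special case $\pi_2=\mathbf{k}$. Your first paragraph does exactly this --- observe that $\mathbf{k}$ is trivially almost regular and invoke Theorem~\ref{th:kundu} --- so on the point of comparison you match the paper precisely.

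Your second and third paragraphs go beyond what the paper does, sketching a self-contained extremal/swap argument in the spirit of Kundu's and Chen's proofs. As a plan it is reasonable and you are candid that the crux is the exchange lemma (your step (i)--(iii)). Just be aware that this sketch, as written, is not yet a proof: the ``chase the alternating structure'' and ``use graphicness of $\pi_1$ to find a legal swap'' steps are exactly where all the work in \cite{kundu} and \cite{chen} lives, and neither the augmenting-path picture for $b$-matchings nor the $(g,f)$-factor reformulation you mention avoids that analysis. If the intent is merely to supply a proof for the paper, the one-line citation to Theorem~\ref{th:kundu} suffices and is what the paper itself does.
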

\noindent In 1973 S. Kundu proved a more general form of the $k$-factor Conjecture.
\begin{theorem}[Kundu  \cite{kundu}]\label{th:kundu}
Let $\pi_1$ and $\pi_2$ be graphic, where the elements of $\pi_2$  are drawn from $\{k-1, k\}$. Then $\pi_1$ and $\pi_2$ pack if and only if $\pi_1+\pi_2$ is graphic.
\end{theorem}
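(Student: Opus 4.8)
\noindent\emph{Proof strategy.}
The forward implication is exactly Observation~\ref{Th:trivi}. For the converse, assume $\pi:=\pi_1+\pi_2$ is graphic; I would first reformulate the goal. A $(\pi_1,\pi_2)$-packing is the same data as a realization $G$ of $\pi$ together with a spanning subgraph $F\subseteq G$ of degree sequence $\pi_2$, since then $G-F$ automatically realizes $\pi_1$. So it suffices to show that \emph{some} realization of $\pi$ contains a subgraph realizing $\pi_2$. The word ``some'' is essential: one can build realizations of $\pi$ containing no such $F$, so --- unlike the fixed-host problem solved by Tutte and Edmonds --- the host graph must itself be constructed.

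The plan is an exchange argument, organized by induction on $k$. Among all pairs $(G,F)$ with $G$ a realization of $\pi$ and $F\subseteq G$ satisfying $d_F(v)\le\pi_2(v)$ for all $v$, choose one maximizing $\sum_v d_F(v)$. If this equals $\sum_v\pi_2(v)$ then $d_F=\pi_2$ pointwise and $(F,G-F)$ is the desired packing. Otherwise some vertex $a$ is \emph{deficient}, $d_F(a)<\pi_2(a)\le d_G(a)$, so a non-$F$ edge $ab$ exists; by maximality every such $b$ satisfies $d_F(b)=\pi_2(b)$, so adding a single edge cannot help. One instead searches for an alternating trail leaving $a$ --- alternately non-$F$ edges and $F$-edges, beginning and ending with a non-$F$ edge --- whose other endpoint is also deficient; toggling $F$ along it (adding its non-$F$ edges, deleting its $F$-edges) leaves $d_F$ unchanged at every internal vertex and raises $\sum_v d_F(v)$ by $2$, contradicting maximality. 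Hence no alternating trail from $a$ meets a second deficient vertex.

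Ruling out this ``stuck'' configuration is the crux, and it is exactly where the hypothesis on $\pi_2$ is used. Since every $\pi_2(v)\in\{k-1,k\}$, the deficiency and the surplus at each vertex is at most $1$, so the Gallai--Edmonds-type barrier consisting of $a$ together with the vertices unreachable from it by alternating trails is very tightly constrained; moreover $\sum_v\pi_2(v)$ is even (as $\pi_2$ is graphic), so the total deficiency is even and positive and a second deficient vertex genuinely exists. The remaining work --- the heart of Kundu's argument --- is to show that, unless this barrier forces $\pi_1$, or $\pi$ itself, to be non-graphic (contrary to hypothesis), the host $G$ admits a swap operation --- replacing a pair of edges $xy,zw$ by $xz,yw$, which preserves the degree sequence $\pi$ --- after which the alternating search from $a$ does reach a deficient vertex; iterating strictly increases $\sum_v d_F(v)$, so the process terminates. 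I expect the delicate point to be exactly this interplay: reducing the barrier by swaps of $G$ while maintaining $d_F\le\pi_2$ and the parity of $\sum_v d_F(v)$, with the almost-$k$-regularity of $\pi_2$ being what keeps the barrier small enough to break. The base case $k=1$, where $\pi_2$ has entries in $\{0,1\}$ and $F$ is a matching, already subsumes Gr\"unbaum's $1$-factor conjecture, and the same argument handles it. (A tempting alternative --- peel a near-$1$-factor off $\pi_2$, apply induction to the reduced almost-$(k-1)$-regular sequence, then add the matching back --- stumbles because a realization produced by the induction need not extend by a matching of the prescribed support, whereas constructing the host directly, as above, sidesteps this.)
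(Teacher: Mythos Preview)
The paper does not prove Theorem~\ref{th:kundu}. It is quoted as background: the text attributes the result to Kundu~\cite{kundu} and remarks that the ``book proof'' is due to Chen~\cite{chen}, but neither argument is reproduced. So there is no in-paper proof to compare your proposal against.

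As for the proposal itself, what you have written is an outline rather than a proof, and you say so explicitly (``The remaining work --- the heart of Kundu's argument --- is to show that\dots''; ``I expect the delicate point to be\dots''). The framing is sound: reformulating a $(\pi_1,\pi_2)$-packing as a realization $G$ of $\pi$ containing a spanning $F$ with $d_F=\pi_2$, and then optimizing over pairs $(G,F)$, is the natural set-up, and the forward direction is indeed Observation~\ref{Th:trivi}. The substantive gap is exactly the one you flag: you have not shown that when no augmenting alternating trail exists, a swap on $G$ can always be found that (i) keeps $F\subseteq G$, (ii) keeps $d_F\le\pi_2$, and (iii) creates such a trail. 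Without that lemma the argument does not terminate, and proving it is essentially the whole theorem; the almost-regularity hypothesis must enter in a concrete way here, not just heuristically via ``the barrier is small.'' Your remark that the naive peel-off-a-near-1-factor induction fails is correct and worth keeping. If you want to complete this, Chen's short proof~\cite{chen} is the cleanest model; it avoids the alternating-trail machinery altogether in favor of a direct inductive/counting argument using the Erd\H{o}s--Gallai inequalities, which is why it earns the ``book proof'' label.
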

\noindent In 1974 Lov\'asz gave a shorter proof for the case $\pi_2 = \mathbf{1}$ (see \cite{lovasz}). The ``book proof" for Theorem~\ref{th:kundu} is due to Yang-Chuan Chen (1988, \cite{chen}).

\medskip\noindent
In this paper we consider graphic sequences $\pi$ on the vertex set $V$ with the property that $\pi+\mathbf{1}$ is graphic (so $|V|$ must be even). By Kundu's theorem the sequences $\pi$ and $\mathbf{1}$ pack, and there exist edge-disjoint realizations $G$ and $\mathcal{I}$ where $\mathcal{I}$ is a 1-factor. We call such a realization of the degree sequence $\pi + \mathbf{1}$ a {\em Kundu realization}, and we say that $\mathcal{I}$ is the {\em displayed} 1-factor, as clearly this realization may contain many 1-factors besides $\mathcal{I}$. In particular, there are exponentially many different 1-factors in the complete graph $K_n$. We are interested in which of these 1-factors can occur in Kundu realizations.  Our first result is the following:
\begin{theorem}\label{th:main}
Let $\pi$ be a graphic sequence with $n$ terms in which each term is at most $n/\bound$, and suppose that $\pi$ packs with $\mathbf{1}$. Furthermore, let $\mathcal{J}$ be a given, particular 1-factor on $V$.  Then there exists a realization $G_{\mathcal{J}}$ of $\pi$ such that $(G_{\mathcal{J}},\mathcal{J})$ is a $(\pi,\mathbf{1})$-packing.
\end{theorem}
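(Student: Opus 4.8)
\medskip
\noindent\emph{Proof proposal.}
The plan is to start from any Kundu realization and then rotate its displayed $1$-factor into $\mathcal{J}$, repairing the companion realization of $\pi$ after each rotation so that edge-disjointness is never lost. Since $\pi$ packs with $\mathbf{1}$ (equivalently, by Theorem~\ref{th:kundu}, since $\pi+\mathbf{1}$ is graphic), fix a Kundu realization $(G,M)$: here $G$ realizes $\pi$, $M$ is a $1$-factor on $V$, and $G\cap M=\emptyset$; initially $M$ is some $1$-factor $\mathcal{I}$. As long as $M\neq\mathcal{J}$, the symmetric difference $M\triangle\mathcal{J}$ contains an alternating cycle $w_0w_1w_2w_3\cdots w_{2k-1}$ with $w_0w_1,w_2w_3\in M$ and $w_1w_2\in\mathcal{J}$. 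Replace the edges $\{w_0w_1,w_2w_3\}$ of $M$ by $\{w_1w_2,w_0w_3\}$; the result $M'$ is again a $1$-factor, it contains the $\mathcal{J}$-edge $w_1w_2$, and $|M'\triangle\mathcal{J}|<|M\triangle\mathcal{J}|$ (down by $4$ if $k=2$, by $2$ otherwise, the alternating cycle merely becoming shorter). Hence at most $|\mathcal{I}\triangle\mathcal{J}|/2\le n/2$ rotations drive the displayed $1$-factor to $\mathcal{J}$, and the companion realization of $\pi$ at that moment is the desired $G_{\mathcal{J}}$.

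\medskip
\noindent It remains to update $G$ after a rotation. Write $a=w_0$, $b=w_1$, $c=w_2$, $d=w_3$, so $M'=(M\setminus\{ab,cd\})\cup\{bc,ad\}$ differs from $M$ only in these four edges; thus $G$ can meet $M'$ only in $bc$ or $ad$. I would delete each such edge by one $2$-switch: if $bc\in E(G)$, choose $xy\in E(G)$ with $bx\notin E(G)$ and $cy\notin E(G)$ and replace $\{bc,xy\}$ by $\{bx,cy\}$, preserving all degrees. If $x$ avoids $\{a,b,c,d\}\cup N_G(b)$ and $y$ avoids $\{a,b,c,d\}\cup N_G(c)$, then the switch is legal, the new edges $bx,cy$ lie in neither $M$ nor $M'$ (the relevant matching partners of $b,c$ are among $a,b,c,d$), and, crucially, neither $bx$ nor $cy$ lies in $\mathcal{J}$, since the $\mathcal{J}$-partner of $b$ is $c$ and that of $c$ is $b$. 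Consequently $|E(G)\cap\mathcal{J}|$ never grows during a repair, and handling $bc$ before $ad$ causes no interference because the replacement edges avoid $\{a,d\}$. So the whole scheme is monotone and well defined as soon as the required $2$-switches can always be found.

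\medskip
\noindent The heart of the proof --- and the only place the hypothesis $\pi_i\le n/\bound$ enters --- is precisely this last point. The vertices forbidden for $x$ (and for $y$) number at most $4+\Delta$ with $\Delta=\max_i\pi_i\le n/\bound$, so one needs an edge of $G$ both of whose endpoints avoid a set $X$ of size $O(n/\bound)$. The crude bound ``the number of edges of $G$ meeting $X$ is at most $\sum_{v\in X}d_G(v)\le|X|\Delta$'' is not by itself enough when $\pi$ is far from regular; one must refine it --- discarding the edges $G$ induces inside $X$ (which that bound double-counts), or exhibiting a concrete edge of $G$ that provably misses $X$ (e.g.\ an edge from a guaranteed sparse portion of $G$) --- and I expect a short case analysis is unavoidable for the few highly clustered configurations of $G$; in the extreme case one would abandon the rotation and build a realization of $\pi$ disjoint from $\mathcal{J}$ directly from the graphicness of $\pi+\mathbf{1}$. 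Making this count work with the explicit constant $\bound$ is where the real difficulty lies; granting it, the iteration of the first two paragraphs terminates and produces $G_{\mathcal{J}}$. An alternative route, less in the spirit of this paper, would sidestep the rotation entirely and instead verify the Tutte--Lov\'asz $f$-factor criterion for $K_n$ minus the $1$-factor $\mathcal{J}$, where the bound $\pi_i\le n/\bound$ is again exactly what makes the deficiency inequality hold.
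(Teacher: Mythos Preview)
Your plan is more roundabout than the paper's. The paper never rotates the displayed $1$-factor toward $\mathcal{J}$: starting from any Kundu realization $(G,\mathcal{I})$, it attacks $G\cap\mathcal{J}$ directly. For each edge $e\in G\cap\mathcal{J}$ one finds another green edge $f\in G\setminus\mathcal{J}$ whose chords with $e$ miss $\mathcal{I}$, and swaps $e$ out of $G$; since the $\mathcal{J}$-partners of the two endpoints of $e$ are each other, the new chords automatically miss $\mathcal{J}$, so $|G\cap\mathcal{J}|$ strictly drops and a short iteration finishes. This is precisely your ``repair'' step, but applied to the edges of $G\cap\mathcal{J}$ themselves rather than to intersections created along the way; your preliminary rotation of $M$ only manufactures conflicts with $G$ that you then have to undo by the very same kind of swap.

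More importantly, the difficulty you flag in your third paragraph---producing an edge of $G$ whose endpoints avoid a forbidden set of size $O(\Delta)$---is the entire content of the argument, and you explicitly do not close it: you list possible remedies (a refined count, a case analysis, an appeal to the $f$-factor criterion) without carrying any of them out. The paper, by contrast, does not treat this as open; it asserts that the enumeration already used in the ``swap-out'' step of the preceding lemma (the joint $G$-neighbourhood of the endpoints of $e$ has at most $2\Delta\le n/12$ vertices, too few to block every candidate partner) carries over verbatim. So your proposal has the right shape but stops exactly where the paper claims the work is already done.
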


\medskip \noindent
Let $G$ and $G'$ be two realizations of a graphic sequence $\pi$. A classical result of Petersen (\cite{pet}) states that it is possible to transform $G$ into $G'$ through a sequence of \textit{swap operations}, wherein the edges and non-edges of an alternating 4-cycle are interchanged. Our second result is the following:
\begin{theorem}\label{th:transform}
Let $(G,\mathcal{I})$ and $(G', \mathcal{J})$ be Kundu realizations of  $\pi+\mathbf{1}$.  Then it is possible to transform $(G,\mathcal{I})$ into $(G', \mathcal{J})$ with swap operations via Kundu realizations where in each step the image of the displayed 1-factor is the next displayed 1-factor.
\end{theorem}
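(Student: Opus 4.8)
The plan is to reduce Theorem~\ref{th:transform} to Theorem~\ref{th:main} together with the classical swap-connectivity of realizations of a fixed degree sequence. Given two Kundu realizations $(G,\mathcal{I})$ and $(G',\mathcal{J})$ of $\pi+\mathbf{1}$, the first step is to connect them at the level of the \emph{displayed} $1$-factors: starting from $(G,\mathcal{I})$, I want to reach a Kundu realization whose displayed $1$-factor is $\mathcal{J}$ rather than $\mathcal{I}$. By Theorem~\ref{th:main} there exists \emph{some} Kundu realization $(G_{\mathcal{J}},\mathcal{J})$, but the real work is to get there by swaps that preserve ``Kunduness'' at every intermediate step and that carry the displayed $1$-factor along as claimed. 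So the bulk of the argument will be a lemma of the form: if $(H,\mathcal{I})$ is a Kundu realization and $e=v_av_b\in\mathcal{I}$, $f=v_cv_d\in\mathcal{J}$ are two disjoint pairs with $\{v_c,v_d\}$ currently matched in $\mathcal{I}$ by other edges, then one can, through a short sequence of Kundu-preserving swaps, modify $H$ so that $\mathcal{I}$ is replaced by a $1$-factor agreeing with $\mathcal{J}$ on one more edge. Iterating over the (at most $n/2$) edges of $\mathcal{J}$ then transforms $(G,\mathcal{I})$ into some $(G'',\mathcal{J})$.

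The key mechanism for the lemma is the \emph{alternating-cycle} move familiar from Petersen's theorem, adapted to keep the $1$-factor displayed. Suppose I wish to force the edge $v_cv_d$ of $\mathcal{J}$ into the displayed factor. Currently $v_c$ is matched to some $v_{c'}$ and $v_d$ to some $v_{d'}$ in $\mathcal{I}$. I would like to perform the double swap that deletes $v_cv_{c'}$ and $v_dv_{d'}$ and inserts $v_cv_d$ and $v_{c'}v_{d'}$; this preserves all degrees, and if the two inserted pairs are currently non-edges of $H$ and the resulting $1$-factor is still edge-disjoint from the rest of $H$, it preserves Kunduness. The obstructions are exactly: (i) $v_cv_d$ or $v_{c'}v_{d'}$ is already an edge of $G$ (the non-$\mathcal{I}$ part), or (ii) $v_{c'}v_{d'}$ collides with another matching edge. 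Here is where the sparsity hypothesis $\Delta(\pi)\le n/\bound$ enters decisively: because every vertex has degree at most $n/\bound$ in $G$ (and exactly one more in the displayed factor), a counting/averaging argument shows there are many choices of ``rerouting'' vertices that avoid all these collisions — in effect one has enough room to find an alternating $4$- or $6$-cycle realizing the desired change while staying inside the Kundu world. I expect to phrase this as: there exist auxiliary vertices through which one can ``transport'' the $\mathcal{I}$-partners of $v_c$ and $v_d$ so that after a bounded number of swaps the pair $v_cv_d$ joins the displayed factor, each intermediate state being Kundu.

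Once both realizations display the \emph{same} $1$-factor $\mathcal{J}$, say we have arrived at $(G'',\mathcal{J})$ and we want $(G',\mathcal{J})$, the remaining task is to swap-connect $G''$ and $G'$ as realizations of $\pi$ \emph{on the vertex set $V\setminus\mathcal{J}$}, i.e.\ avoiding the edges of $\mathcal{J}$. Both $G''$ and $G'$ are realizations of $\pi$ that are edge-disjoint from $\mathcal{J}$; equivalently, adding a ``forbidden'' structure, they are realizations of a degree sequence in the graph $K_n - \mathcal{J}$. Swap-connectivity of realizations of a fixed degree sequence within a host graph that is itself nicely structured (here $K_n$ minus a perfect matching) is again provable by a Petersen-type argument, possibly needing swaps along alternating $4$-cycles whose vertices avoid the $\mathcal{J}$-edges — and sparsity once more guarantees enough freedom. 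A swap of $G''$ that avoids $\mathcal{J}$ is automatically a Kundu-preserving swap with displayed factor $\mathcal{J}$, so concatenating this with the first phase completes the transformation from $(G,\mathcal{I})$ to $(G',\mathcal{J})$.

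The main obstacle, I anticipate, is the first phase: showing that one can always \emph{move} the displayed $1$-factor by one edge toward $\mathcal{J}$ while never leaving the set of Kundu realizations. Each individual swap is trivial to check, but controlling the interaction between the $1$-factor edges and the potentially dense-looking local structure of $G$ requires the $n/\bound$ bound to be spent carefully — this is presumably where the specific constant in \bound{} and the somewhat delicate averaging argument live, and where a naive approach would fail because a single bad swap could create a vertex of degree $0$ in the factor or a multi-edge with $G$.
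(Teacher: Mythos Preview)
Your two--phase plan is the mirror image of the paper's. The paper first transforms the $\pi$--realization while letting the displayed $1$--factor drift (Lemma~\ref{th:moving}: take an ordinary swap sequence $G\to G'$ and, before any swap that would land on a current $1$--factor edge, perform a preparatory K--swap on the $1$--factor to move that edge aside), reaching $(G',\I')$; it then transforms $\I'$ into $\J$ with $G'$ held fixed (Lemmas~\ref{th:disjoint} and~\ref{th:non-disjoint}). You reverse the order: first push the $1$--factor from $\I$ to $\J$ while letting $G$ drift, then repair $G$ with $\J$ held fixed.

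The genuine gap is your Phase~2. You assert that two realizations $G'',G'$ of $\pi$, both edge--disjoint from a fixed perfect matching $\J$, are connected by swaps that themselves avoid $\J$, and you call this ``a Petersen--type argument.'' That restricted swap--connectivity is exactly the kind of statement that is \emph{not} automatic. The symmetric difference $G''\triangle G'$ decomposes only into alternating closed \emph{walks} (Theorem~\ref{th:decomp}(i)), not vertex--disjoint cycles, since vertices may have degree up to $2\Delta$ there; and the chords one needs when processing such a walk (Theorem~\ref{th:swap-series}) can perfectly well lie in $\J$. In the paper the dual restricted problem --- move one $1$--factor to another with $G$ fixed --- is where essentially all the work lives: the symmetric difference there is a disjoint union of simple cycles (easy structure), but the obstruction $G$ has maximum degree $\Delta$, and handling it requires the three Processes $\clubsuit,\diamondsuit,\spadesuit$ together with the counting in Propositions~\ref{th:no-long-cycles}--\ref{th:4cycle}. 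Your Phase~2 trades a heavier obstruction (degree~$\Delta$) for a lighter one (degree~$1$), but simultaneously trades a tame symmetric difference (disjoint cycles) for a wilder one (closed walks that may revisit vertices). It may well be no harder in the end, but it is not a one--liner, and you have not supplied the argument; this is precisely the place where the paper spends its $n/\bound$ budget. Note also that in your Phase~1 the fallback ``swap the obstructing $G$--edge out'' can fail outright when that edge is forced in every realization of $\pi$ (for instance $\pi=(2,2,1,1,0,\ldots,0)$ forces $v_1v_2$), so the $6$--cycle rerouting through an auxiliary $\I$--edge is doing real work and deserves its own lemma with a careful count.
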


\section{Definitions and tools}\label{sec:def}
In this paper all graphs are simple (no multiple edges, no loops) on the vertex set $V$. As already mentioned, the degree sequences of any graphs considered are not assumed to be ordered in any specific way.

Let $G$ be a realization of a graphic sequence $\pi$. If $a,b,c$ and~$d$ are vertices of $G$ satisfying $ab,cd\in E$ and $bc,ad\notin E$, then the graph $G' =(V,E')$ with $E'=E\cup \{bc,ad\} \setminus \{ab,cd\}$ is another realization of~$\pi$. This \textbf{swap} operation, denoted by $ab,cd\Rightarrow bc,ad$, was introduced by Havel~(\cite{havel}) and reinvented by Hakimi~(\cite{hakimi}), and was essentially utilized by Petersen in \cite{pet}. It is also known as a {\it switch} or {\it rewiring} operation.

Let $G$ be a Kundu realization of the packing degree sequences $(\pi_1, \mathbf{1})$ with the displayed 1-factor $\I$. Consider a swap operation $ab,cd \Rightarrow bc,ad$. If
\begin{equation}\label{eq:rest}
\{ab,cd\} \subset \I \quad \mbox{ or } \quad \{ab,cd\} \cap \I=\emptyset
\end{equation}
then the ``image" $\I'=\I \cup \{bc,ad\} \setminus \{ab,cd\}$ is a 1-factor again, and the resulting graph $G'$ is again an $(\pi_1, \mathbf{1})$ packing with the displayed 1-factor $\I'$. However, if $ab \in \I$ but $cd \not \in \I$, then $\I'$ will not contain a 1-factor. It is also possible that $G'$ is not a $(\pi_1, \mathbf{1})$ packing at all.

In this paper we always will apply swap operations which conform to property~(\ref{eq:rest}). Such a swap operation is a {\em Kundu-restricted} swap operation or a {\em K-swap} for short.

\medskip\noindent
Next we recall some important facts about (general) swap sequences from the paper of Erd\H{o}s, Kir\'aly and Mikl\'os (\cite{distance}).  Let $G$ and $G'$ be two realizations of a given (but not necessarily Kundu) degree sequence $\pi$. The symmetric difference $\nabla=E(G) \triangle E(G')$ of their edges has a natural 2-coloring: an edge in $\nabla$ is {\bf red} if it belongs to $G$ and {\bf blue} if it belongs to $G'$.

\begin{theorem}\label{th:decomp}
\begin{enumerate}[{\rm (i)}]
\item Every vertex in $\nabla$ has an equal number of red and blue adjacent edges. Moreover, the symmetric difference $\nabla$ can be decomposed into alternating (with respect to the coloring) closed walks of  even length.
\item When the degree sequence is bipartite, then the decomposition can be made into cycles, where no cycle contains any vertex twice.
\end{enumerate}
\end{theorem}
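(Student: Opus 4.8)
\noindent\emph{Proof idea.} For part (i) I would start from a purely local degree count. Fix a vertex $v$; since $G$ and $G'$ realize the same sequence, $d_G(v)=d_{G'}(v)$. The edges at $v$ lying in both $G$ and $G'$ contribute equally to each of these two degrees, so after cancelling them the edges of $G$ at $v$ that are absent from $G'$ (the red edges at $v$) must match in number the edges of $G'$ at $v$ that are absent from $G$ (the blue edges at $v$). Hence every vertex of $\nabla$ has equal red- and blue-degree, and in particular even total degree $2k_v$ in $\nabla$.

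To obtain the decomposition I would use a transition system. At each vertex $v$ the $k_v$ red edge-ends and the $k_v$ blue edge-ends can be paired by an arbitrary bijection matching red ends to blue ends. Splitting $\nabla$ at every vertex according to these pairings decomposes its edge set into closed trails, each edge used exactly once; and since every pairing joins a red end to a blue end, consecutive edges of each trail have opposite colors. Thus each resulting closed walk alternates in color and therefore has even length, which is exactly the claimed decomposition.

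For part (ii) I would use the bipartition $(X,Y)$: every edge of $G$ and of $G'$, hence every edge of $\nabla$, joins $X$ to $Y$. I would orient each red edge from its endpoint in $X$ toward its endpoint in $Y$, and each blue edge from $Y$ toward $X$. Then a vertex in $X$ has out-degree equal to its red-degree and in-degree equal to its blue-degree, a vertex in $Y$ has the two interchanged, and part (i) shows in-degree equals out-degree everywhere. A finite digraph with this balance property decomposes into edge-disjoint \emph{simple} directed cycles: any vertex of positive degree has an outgoing edge, so following edges forward must revisit a vertex and expose a simple directed cycle; deleting its edges preserves balance, and one iterates. Traversing such a cycle the vertices alternate between $X$ and $Y$, so the edges alternate between red ($X\to Y$) and blue ($Y\to X$); each cycle is therefore alternating, even, and visits no vertex twice.

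\noindent The conceptually decisive step is the orientation in part (ii): this is the only place bipartiteness is used, and it is what promotes the vertex-repeating closed trails of part (i) into genuinely simple cycles. I expect the main care to be needed there only in checking that the balance condition survives deletion of a simple cycle; in part (i) the sole subtlety is verifying that the transition-system splitting indeed yields a partition of the edge set into closed trails, which is standard.
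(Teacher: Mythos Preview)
The paper does not actually prove this theorem: it is quoted verbatim as a known fact from Erd\H{o}s, Kir\'aly and Mikl\'os~\cite{distance} (see the sentence immediately preceding the statement), so there is no in-paper proof to compare against.

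Your argument is correct and is the standard one. The local degree count in part~(i) is exactly right, and the transition-system construction (pairing red ends to blue ends at each vertex) does partition the edge set of~$\nabla$ into alternating closed trails; this is the Eulerian-type decomposition one expects. For part~(ii) your orientation trick is the clean way to upgrade closed trails to simple cycles: bipartiteness lets you orient all red edges $X\to Y$ and all blue edges $Y\to X$, whereupon part~(i) gives an Eulerian digraph, and the greedy ``follow an out-edge until you repeat a vertex'' extraction yields simple directed cycles that are automatically alternating. The two points you flag as needing care---that removing a simple directed cycle preserves the balance condition, and that the transition system really partitions the edge set---are both routine, so the sketch is complete as it stands.
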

\begin{theorem}\label{th:swap-series}
\begin{enumerate}[{\rm (i)}]
\item If $\nabla$ consists of only a single alternating cycle $C$, then there exists a sequence of consecutive swap operations transforming a realization $G$ into a realization $G'$ such that every swap in the process is applied to vertex pairs {\rm i.e. ({\bf chords})} contained completely within $V(C)$.
\item The process never uses a chord $uv$ from $C$ if the distance between $u$ and $v$ is even when traversing $C$ from $u$ to $v$ in both directions.  Any chord not satisfying this condition is called {\bf eligible}.
\end{enumerate}
\end{theorem}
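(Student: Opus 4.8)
The plan is to prove both parts together by an inductive peeling argument that maintains the invariant that the current symmetric difference is a disjoint union of alternating cycles, all contained in $V(C)$. Write $C=u_0u_1\cdots u_{2\ell-1}$ in cyclic order (indices mod $2\ell$), with the red edges (those of $G$) being the $u_{2i}u_{2i+1}$ and the blue edges (those of $G'$) being the $u_{2i+1}u_{2i+2}$; alternation forces $|C|=2\ell$ to be even. Since $\nabla=C$, every edge of $\nabla$ lies inside $V(C)$, so $G$ and $G'$ agree on all edges meeting $V\setminus V(C)$; hence only edges inside $V(C)$ must be corrected and I may work entirely inside $V(C)$, each swap acting on four vertices of $C$.

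I would first settle (ii), since it also dictates which swaps are admissible. Reading positions modulo $2$ splits $V(C)$ into even-position and odd-position vertices, and every cycle edge joins the two classes; thus every alternating cycle is bipartite with respect to this partition. A chord $u_iu_j$ can therefore lie on an alternating cycle only if $i$ and $j$ have opposite parity, which is exactly the condition that both arc-distances between $u_i$ and $u_j$ along $C$ are odd, i.e.\ that $u_iu_j$ is eligible. An even-distance chord joins two vertices of the same class, so inserting it would destroy the bipartite (hence alternating) structure. Since the whole construction preserves the invariant, it never uses an even-distance chord, which is precisely (ii).

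For (i) I would induct on the potential $\Phi=\sum_j|C_j|^2$, summed over the alternating cycles $C_j$ currently comprising $\nabla$, noting $\Phi=0$ exactly when $G$ has become $G'$. The engine is a Main Lemma: every alternating cycle of length $2\ell\ge 4$ admits a swap on its own vertices, using only eligible chords, after which $\Phi$ strictly drops. Two model moves drive it. The \emph{fold}: if some chord $u_{2i}u_{2i+3}$ is absent from $G$, then $u_{2i}u_{2i+1},\,u_{2i+2}u_{2i+3}\Rightarrow u_{2i+1}u_{2i+2},\,u_{2i}u_{2i+3}$ installs the correct blue edge $u_{2i+1}u_{2i+2}$ and shortens the cycle by two. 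The \emph{split}: pairing two non-consecutive red edges $u_{2i}u_{2i+1}$ and $u_{2j}u_{2j+1}$ through the eligible crosswise chords $u_{2i}u_{2j+1}$ and $u_{2i+1}u_{2j}$ (valid when both are absent from $G$) cuts $C$ into two shorter alternating cycles, and since $a^2+b^2<(a+b)^2$ for positive $a,b$ this lowers $\Phi$ as well. Both moves create only opposite-parity chords, so the invariant, and with it (ii), is preserved. The base case is a single alternating $4$-cycle, which one direct swap sends to the empty graph using no chord at all.

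The hard part is proving the Main Lemma in the \emph{saturated} situation, where every short chord needed by a fold or a nearby split is already present in $G$, hence (being outside $\nabla$) present in both $G$ and $G'$. A saturated alternating $6$-cycle, in which the three main diagonals lie in $E(G)$, shows that all folds and all distance-$3$ splits can be blocked simultaneously; the proof must then exploit a present eligible chord itself, using a suitable diagonal as one of the two \emph{removed} edges so that the swap reinstalls two correct cycle edges at once and again lowers $\Phi$. In general I would take the shortest eligible chord missing from $G$ and argue by its arc-distance that either it directly enables a fold or a split, or the surrounding shell of present shorter chords forces such a diagonal-redirect move. The technical crux, and the step I expect to be the main obstacle, is exactly this bookkeeping: verifying that in every configuration some admissible eligible swap exists and that it genuinely decreases $\Phi$ rather than merely reshuffling the cycle. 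Once the Main Lemma is established, repeatedly applying it to a cycle of $\nabla$ of length at least $4$ drives $\Phi$ to $0$ through a sequence of swaps confined to $V(C)$ and using only eligible chords, which is (i).
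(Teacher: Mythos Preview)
This theorem is not proved in the paper at all; it is quoted from Erd\H{o}s--Kir\'aly--Mikl\'os~\cite{distance} as a tool (see the sentence introducing Theorems~\ref{th:decomp} and~\ref{th:swap-series}). So there is no in-paper argument to compare your proposal against.

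On its own merits, your outline has the right shape but is not a proof. You yourself flag the saturated case of your Main Lemma as ``the main obstacle'' and give only a plan (``take the shortest eligible chord missing from $G$ and argue by its arc-distance\ldots''), not an argument. Two concrete problems would have to be resolved before this becomes a proof.

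First, your diagonal-redirect idea---using a present eligible chord as one of the two \emph{removed} edges---can break the invariant you rely on. In an alternating $8$-cycle with $u_0u_3\in E(G)\cap E(G')$ and $u_0u_5\notin E(G)\cup E(G')$, the swap $u_0u_3,\,u_4u_5\Rightarrow u_3u_4,\,u_0u_5$ leaves a new $\nabla$ consisting of the two $4$-cycles $u_0u_1u_2u_3$ and $u_0u_5u_6u_7$, which \emph{share} the vertex $u_0$. Your invariant ``disjoint union of alternating cycles'' fails, and your potential $\Phi=\sum_j|C_j|^2$ is no longer canonically defined. You would need either a different potential (e.g.\ one insensitive to the choice of closed-walk decomposition, cf.\ Theorem~\ref{th:decomp}(i)) or a more careful move.

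Second, your argument for (ii) is circular as written. You claim that ``a chord $u_iu_j$ can lie on an alternating cycle only if $i$ and $j$ have opposite parity,'' but alternation is with respect to the red/blue colouring, not the even/odd bipartition of $V(C)$; nothing forces a later $\nabla$ to respect that bipartition a priori. What is true---and sufficient---is that your explicit moves (fold and split) use only opposite-parity pairs by construction; once you pin down the saturated-case move you must verify the same for it. State (ii) as a property of your construction, not as a structural constraint on arbitrary alternating cycles.
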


\section{Proofs}\label{sec:poof}
Going forward, we consider a graphic sequence $\pi=(d_1,\ldots, d_n)$ such that $\chi:=\pi +\mathbf{1}$ is also graphic and $\Delta:=\max(\pi)\le \frac{n}{\bound}$. Our proofs rely on several lemmas. We start with the following seemingly simple situation:

\begin{lemma}\label{th:disjoint}
Let $(G,\I)$ and $(G,\J)$ be two Kundu realizations of $\chi$ where $\I$ and $\J$ are edge disjoint. Then there exists a K-swap sequence between the two realizations such that $G$ does not change at any step of the process.
\end{lemma}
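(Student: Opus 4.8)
The plan is to translate the statement into a navigation problem about perfect matchings and then rotate alternating cycles. Since $\I$ and $\J$ are edge-disjoint $1$-factors, both edge-disjoint from $G$, the symmetric difference $E(G\cup\I)\,\triangle\,E(G\cup\J)$ is exactly $\I\cup\J$, and as every vertex meets one edge of $\I$ and one of $\J$ (distinct), Theorem~\ref{th:decomp}(i) gives that $\I\cup\J$ is a vertex-disjoint union of alternating cycles $C_1,\dots,C_m$ of even length at least $4$, with $E(G)$ disjoint from all of them. The key observation is that a K-swap that leaves $G$ unchanged is precisely a \emph{flip} of the current displayed $1$-factor $M$: it deletes two edges $ab,cd$ of $M$ and inserts the other perfect matching $\{bc,ad\}$ on those four vertices, and the only genuine constraint for it to be a K-swap keeping $G$ fixed is that $bc,ad\notin E(G)$ (so the new $1$-factor is again disjoint from $G$; that $bc,ad\notin M$ is automatic). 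So it suffices to produce a path from $\I$ to $\J$ in the flip graph of perfect matchings of $H:=K_n\setminus E(G)$, and a standard argument lets us do it one cycle at a time: transform $\I$ into $(\I\setminus C_1)\cup(\J\cap C_1)$ using only flips whose four vertices lie in $V(C_1)$, then repeat on $C_2$, etc., the untouched cycles never interfering.

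Fix one alternating cycle $C=(x_1,\dots,x_{2\ell})$ with $x_{2i-1}x_{2i}\in M$ and $x_{2i}x_{2i+1}\in\J$. If $\ell=2$ a single flip suffices and the two inserted edges are exactly the two $\J$-edges of $C$, hence outside $E(G)$ for free, so length-$4$ cycles never cause trouble. For $\ell\ge 3$ the workhorse is the rotation sequence underlying Theorem~\ref{th:swap-series}: with pivot $p=x_1$, the flips $x_1x_{2k},\,x_{2k+1}x_{2k+2}\Rightarrow x_{2k}x_{2k+1},\,x_1x_{2k+2}$ for $k=1,\dots,\ell-1$ turn $M|_C$ into $\J|_C$; every deleted edge lies in the current $1$-factor (so $G$ is untouched), and every inserted edge is either a $\J$-edge of $C$ or one of the $\ell-2$ \emph{pivot chords} $x_1x_4,x_1x_6,\dots,x_1x_{2\ell-2}$. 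Because $G$ is disjoint from $\I\cup\J$, this is a legitimate K-swap sequence as soon as $p$ has no $G$-neighbour in the colour class of $C$ opposite to $p$, and we have two colour classes and $2\ell$ candidate pivots available.

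The main obstacle is the remaining case: every vertex of $C$ has a $G$-neighbour across $C$'s bipartition, so the rotation sequence is blocked for every pivot; equivalently $G$ has at least $\ell$ edges inside $V(C)$. I would dispose of this in two sub-cases, and it is here that the hypothesis $\Delta\le n/\bound$ enters. If $C$ is long, split it: one flip deleting $M$-edges $x_{2a-1}x_{2a},x_{2b-1}x_{2b}$ and inserting chords $x_{2a}x_{2b-1},x_{2a-1}x_{2b}$ replaces $C$ by two alternating cycles of lengths $2(b-a)$ and $2(\ell-(b-a))$, both strictly shorter than $2\ell$ when $b-a$ is kept in a middle range; there are $\Theta(\ell^{2})$ such pairs $(a,b)$, while each of the $\le \ell\Delta$ edges of $G$ inside $V(C)$ can coincide with an inserted chord for at most two pairs, so at most $2\ell\Delta\le 2\ell\cdot n/\bound$ pairs are spoiled, and since $2\ell\le n$ a good split exists once $\ell$ exceeds a fixed multiple of $\Delta$; we then recurse. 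If $C$ is short, then $V(C)$ misses a positive fraction of $V$, so there are many "spare" edges of the current $1$-factor disjoint from $V(C)$; each vertex of $C$ has at most $\Delta\le n/\bound$ neighbours in $G$, so all but a few spare vertices are safe, and I would splice a well-chosen spare edge $zz'$ into $C$ (deleting one $M$-edge of $C$ and $zz'$, inserting the two crossing edges) to lengthen it, run the rotation through the new pivot $z$, and finally undo the splice. Tracking the symmetric difference through splice/rotate/unsplice, and verifying that every intermediate $1$-factor stays disjoint from $G$, is the delicate bookkeeping; the numerical value of the constant in $\Delta\le n/\bound$ is exactly what makes all of these counting inequalities hold simultaneously, and the long/short ranges overlap so that every cycle is covered.
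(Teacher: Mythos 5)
Your framework (decompose $\I\triangle\J$ into alternating cycles, realize the transformation by flips of the displayed $1$-factor, rotate a cycle from a pivot using only eligible chords, and split overlong cycles by a counting argument) matches the paper's strategy up through Process $\clubsuit$ and Proposition~\ref{th:no-long-cycles}; your split-counting for long cycles is essentially the paper's bound $L\le 4\Delta+6$. The genuine gap is in your treatment of a \emph{short} blocked cycle $C$ (every pivot on $C$ has a green chord across the bipartition). Your splice-and-pivot plan needs an outside vertex $z$, matched by a spare edge $zz'$, such that after splicing, the rotation from pivot $z$ is legal; that forces $z$ to have \emph{no} green neighbour among (essentially) an entire colour class of $V(C)$, i.e.\ about $\ell$ prescribed vertices. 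The number of outside vertices having at least one green neighbour in that class can be as large as $\ell\Delta$, and in the blocked regime $\ell$ is only bounded by roughly $2\Delta$, so you need an inequality of the shape $2\Delta^{2}\lesssim n$. The hypothesis of the lemma is only $\Delta\le n/\bound$, which permits $\Delta$ linear in $n$, so for large $n$ every vertex outside $C$ may well have green neighbours in both colour classes of $C$ and no safe pivot exists; your ``long'' range ($\ell\gtrsim\Delta$) and ``short'' range ($\ell\lesssim n/\Delta$) only overlap when $\Delta=O(\sqrt n\,)$, not under the stated degree bound. So the constant $\bound$ does not make your counting close; the argument as written proves the lemma only for $\Delta=O(\sqrt n\,)$.

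This is exactly the point where the paper does something structurally different rather than quantitatively sharper: instead of demanding one vertex that is green-isolated from half of $C$, it only ever needs the absence of green edges between $C$ and \emph{small prescribed pieces of other cycles} --- triples of red edges on another long cycle (Process $\diamondsuit$, Proposition~\ref{th:noD}) or a single $4$-cycle of $\nabla$ (Process $\spadesuit$, Proposition~\ref{th:4cycle}). Blocking all of these forces green edges from $C$ to a positive fraction of \emph{all} other cycles, and since the $4$-cycles alone number about $(n-Z-L)/4$, the demand grows linearly in $n$ while $C$'s green budget is only $L\Delta$ (just $6\Delta$ in the critical $L=6$ case, which is where the constant $\bound$ comes from in (\ref{eq:hat})). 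If you want to salvage your route, you would have to replace the single clean pivot by an interaction with other cycles of this kind; as it stands, the short-cycle case is a real gap, not bookkeeping.
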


\medskip\noindent
{\sc Proof:} To begin, color the edges of $G$, $\I$ and $\J$ with {\em green}, {\em red} and {\em blue}, respectively. Consider the symmetric difference $\nabla= \I \triangle \J$. Since $\I$ and $\J$ are edge-disjoint 1-factors, $\nabla$ is a union of pairwise vertex-disjoint alternating red-blue cycles.

Our strategy is to transform $(G,\I)$ with K-swaps into a realization $(G,\I')$, never using green edges, such that $\nabla'=\I' \triangle \J$ has no green eligible chords.  If, by chance, none of these alternating cycles contains a green eligible chord, then the direct application of Theorem~\ref{th:swap-series} (ii) provides the required K-swap sequence.

In every step we select a longest alternating cycle $C$ from $\nabla$ and work to decrease the length of that cycle.  As we proceed, we will use three different processes. The first two processes can be used to decrease the length of $C$. The third will be used when there are many 4-cycles in $\nabla$. While this third process may further decrease the length of $C$, its main purpose is to decrease the number of the eligible green chords.  As would be expected, throughout the application of these processes we may change the red edges in our realization as we transform $\I$ into $\J$.

\medskip\noindent

Let $L$ denote the length of $C$, and let $e$ and $f$ be red edges in $C$ that are not consecutive along $C$. This implies that the length of $C$ is at least 6, as an alternating 4-cycle cannot have any eligible chords.  Assume that some pair of non-crossing chords $\alpha$ and $\beta$ between the corresponding endpoints of $e$ and $f$ are not (green) edges of $G$. (See Figure~\ref{fig:simp1}.) Then the K-swap $e,f \Rightarrow \alpha,\beta$ splits $C$ into two shorter cycles $\Gamma$ and $\Sigma$, increasing the number of cycles in $\nabla$. We call this operation {\bf Process $\clubsuit$}.
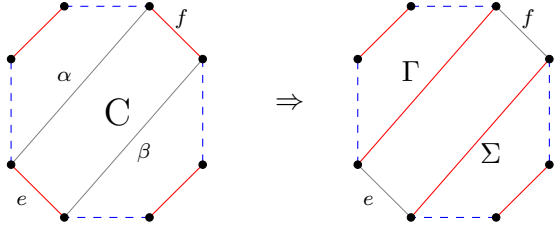
\begin{figure}[h]
\centering
\subfloat{
\begin{tikzpicture}[scale=1.4]
\draw [red] (0,0) -- (-.5,.5);
\node  at (-.4,.4) [label=below:$\scriptstyle e$] {};
\draw [blue, dashed] (-.5,.5) -- (-.5,1.5);
\draw [red](-.5,1.5) -- (0,2);
\draw [blue, dashed] (0,2) -- (.8,2);
\draw [red] (.8,2) -- (1.3,1.5);
\node  at (1.1,1.6) [label=above:$\scriptstyle f$] {};
\draw [blue, dashed] (1.3,1.5) -- (1.3,.5);
\draw [red] (1.3,.5) -- (.8,0);
\draw [blue, dashed] (.8,0) -- (0,0);
\draw [line width=.3pt,gray] (-.5,.5) -- (.8,2);
\node at (.5,1) {\Large C};
\node  at (0,1.1) [label=above:$\scriptstyle \alpha$] {};
\draw [line width=.3pt,gray] (1.3,1.5) -- (0,0);
\node  at (.75,.9) [label=below:$\scriptstyle \beta$] {};
\draw[color=black,fill=black] (0,0) circle (1pt);
\draw [color=black,fill=black] (-.5,.5) circle (1pt);
\draw[color=black,fill=black] (-.5,1.5) circle (1pt);
\draw[color=black,fill=black] (0,2) circle (1pt);
\draw [color=black,fill=black] (.8,2) circle (1pt);
\draw [color=black,fill=black] (1.3,1.5) circle (1pt);
\draw [color=black,fill=black] (1.3,.5) circle (1pt);
\draw [color=black,fill=black] (.8,0) circle (1pt);
\end{tikzpicture}
    }
\qquad \raisebox{1.5cm}{$\displaystyle\Rightarrow$} \quad
\subfloat{
\begin{tikzpicture}[scale=1.4]
\draw [line width=.2pt,gray] (0,0) -- (-.5,.5);
\node at (-.4,.4) [label=below:$\scriptstyle e$] {};
\draw [blue, dashed] (-.5,.5) -- (-.5,1.5);
\draw [red](-.5,1.5) -- (0,2);
\draw [blue, dashed] (0,2) -- (.8,2);
\draw [line width=.2pt,gray] (.8,2) -- (1.3,1.5);
\node  at (1.1,1.6) [label=above:$\scriptstyle f$] {};
\draw [blue, dashed] (1.3,1.5) -- (1.3,.5);
\draw [red] (1.3,.5) -- (.8,0);
\draw [blue, dashed] (.8,0) -- (0,0);
\draw [red] (-.5,.5) -- (.8,2);
\node  at (0,1.1) [label=above:$\Gamma$] {};
\draw [red] (1.3,1.5) -- (0,0);
\node  at (.75,.9) [label=below:$\Sigma$] {};
\draw[color=black,fill=black] (0,0) circle (1pt);
\draw [color=black,fill=black] (-.5,.5) circle (1pt);
\draw[color=black,fill=black] (-.5,1.5) circle (1pt);
\draw[color=black,fill=black] (0,2) circle (1pt);
\draw [color=black,fill=black] (.8,2) circle (1pt);
\draw [color=black,fill=black] (1.3,1.5) circle (1pt);
\draw [color=black,fill=black] (1.3,.5) circle (1pt);
\draw [color=black,fill=black] (.8,0) circle (1pt);
\end{tikzpicture}
    }
\caption{{\bf Process $\clubsuit$}: One cycle K-swapped into two.}
\label{fig:simp1}
\end{figure}

\begin{prop}\label{th:no-long-cycles}
If $ L > 4\Delta + 6$, then we can apply Process $\clubsuit$ to $C$.
\end{prop}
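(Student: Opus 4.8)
The plan is to prove this by a direct counting (pigeonhole) argument on the red edges of $C$. Fix a cyclic labelling $x_0,x_1,\dots,x_{L-1}$ of the vertices of $C$ so that $x_0x_1$ is red; then the red edges of $C$ are exactly the edges $x_{2i}x_{2i+1}$ and the blue edges are the $x_{2i+1}x_{2i+2}$ (indices mod $L$). Set $e=x_0x_1$. A red edge $x_{2i}x_{2i+1}$ is consecutive with $e$ along $C$ exactly when $i\in\{1,\,L/2-1\}$ (it is separated from $e$ by the single blue edge $x_1x_2$ or $x_{L-1}x_0$), so the red edges of $C$ that can serve as $f$ are $f_j:=x_{2j}x_{2j+1}$ for $j\in\{2,3,\dots,L/2-2\}$, of which there are $L/2-3$. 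For a fixed such $j$, the two non-crossing chords joining corresponding endpoints of $e$ and $f_j$ are $\alpha_j:=x_1x_{2j}$ and $\beta_j:=x_0x_{2j+1}$: deleting $e$ and $f_j$ from $C$ leaves the paths $x_1x_2\cdots x_{2j}$ and $x_{2j+1}x_{2j+2}\cdots x_{L-1}x_0$, and adding $\alpha_j,\beta_j$ closes them into alternating cycles of lengths $2j$ and $L-2j$, both at least $4$ and strictly smaller than $L$. So if neither $\alpha_j$ nor $\beta_j$ is an edge of $G$, then $e,f_j\Rightarrow\alpha_j,\beta_j$ is exactly an instance of Process $\clubsuit$.

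It thus remains to exhibit one index $j$ with $\alpha_j,\beta_j\notin E(G)$. First I would check that $\alpha_j$ and $\beta_j$ are genuine chords (their endpoints are non-adjacent on $C$ for $2\le j\le L/2-2$) and that neither lies in $\I$ nor in $\J$: the unique red edge at $x_0$ and at $x_1$ is $x_0x_1$ itself, and the unique blue edges at $x_0$ and $x_1$ are $x_{L-1}x_0$ and $x_1x_2$, none of which coincides with $\alpha_j$ or $\beta_j$ in the allowed range of $j$. Hence the only possible obstruction to using $f_j$ is that $\alpha_j$ or $\beta_j$ is a green edge of $G$. Now as $j$ ranges over $\{2,\dots,L/2-2\}$ the vertices $x_{2j}$ are pairwise distinct, so the edges $\alpha_j=x_1x_{2j}$ are pairwise distinct edges at $x_1$; at most $d_G(x_1)\le\Delta$ of them lie in $E(G)$, and likewise at most $d_G(x_0)\le\Delta$ of the $\beta_j$ lie in $E(G)$. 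So at most $2\Delta$ of the $L/2-3$ candidate indices are bad, and since $L>4\Delta+6$ forces $L/2-3>2\Delta$, some $j$ is good, which proves the proposition.

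I do not expect a serious obstacle: the argument is essentially bookkeeping once the picture of $C$ is set up correctly. The one point to be careful about is the claim that $\alpha_j,\beta_j$ can fail only by being green edges of $G$ — i.e. the verification that they cannot coincide with a red or blue edge of $C$ — since that is what makes $\Delta$ (not $\Delta+1$ or $2\Delta$) the relevant per-vertex bound and keeps the threshold exactly at $4\Delta+6$. One may also note in passing that each of $\alpha_j,\beta_j$ subtends an odd distance along $C$ and is therefore eligible in the sense of Theorem~\ref{th:swap-series}(ii), though eligibility is not actually required to invoke Process $\clubsuit$. Finally it is worth recording that $L$ is even, so $L>4\Delta+6$ in fact means $L\ge 4\Delta+8$; this is the form I would use if I wanted to run the final count purely over integers.
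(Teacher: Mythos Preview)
Your argument is correct. It differs from the paper's in that the paper argues globally: assuming Process~$\clubsuit$ fails, every pair of non-consecutive red edges in $C$ forces a distinct green eligible chord of $C$, so the number $\binom{L/2}{2}-L/2$ of such pairs is at most the number $L\Delta/2$ of possible green chords with both ends on $C$, whence $L\le 4\Delta+6$. You instead fix a single red edge $e=x_0x_1$ and argue locally at its two endpoints, obtaining at most $2\Delta$ blocked partners out of $L/2-3$. Both routes give exactly the same threshold.

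Your version is slightly more elementary and avoids having to verify that the forced green chords are distinct across pairs. The paper's version, however, does double duty: the lower bound $\binom{L/2}{2}-L/2=L^2/8-3L/4$ on the number of green eligible chords inside $C$ (their displayed equation for this quantity) is reused later in the proof of the surrounding lemma when combining the failure of Process~$\clubsuit$ with the failure of Process~$\diamondsuit$. If you adopt your local proof here, that later step would need a separate one-line justification of the same count.
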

\begin{proof}

Suppose that there is no pair of red edges $e$ and $f$ that permit Process $\clubsuit$. Then every pair of non-consecutive red edges in $C$ must have an eligible green edge between them. Notice that the eligible chords between non-consecutive red pairs are distinct, so there must be at least as many edges of $G$ that are eligible chords of $C$ as there are pairs of non-consecutive red edges in $C$.

Note that the number of red edges in $C$ is $L/2$.
Thus, the number of pairs of non-consecutive red edges in $C$ is \begin{equation}\label{eq:elig-green}
\binom{L/2}{2}-\frac{L}{2} = \frac{L^2}{8} -\frac{3L}{4}.
\end{equation}
However, by the maximum degree condition, there are at most $\frac{L\Delta} {2}$ edges of $G$ that are eligible chords of $C$.  Thus if no Process $\clubsuit$ applies, then
\begin{equation}\label{eq:cyclelength}
\binom{L/2}{2}-L/2 = \frac{L^2}{8} -\frac{3L}{4} \le \frac{L\Delta}{2},
\end{equation}
which simplifies to $L\le 4\Delta+6$, a contradiction.
\end{proof}
As we proceed, we assume that for every longest cycle $C$, Process $\clubsuit$ cannot be applied.

\bigskip\noindent
Now assume that there is some cycle $D$ from $\nabla$ with $|D| \ge 6$.  Let $a$, $b$ and $c$ be red edges in $C$ with endpoints $a_1, a_2, b_1, b_2, c_1, c_2$ in cyclic order, and let $\alpha, \beta$ and $\gamma$ be red edges in $D$ with endpoints $\alpha_1, \alpha_2, \beta_1, \beta_2, \gamma_1, \gamma_2$ in cyclic order. Furthermore, assume that there are no green edges between $C$ and $D$ with endpoints in these triples of edges.  Performing the K-swap $a_1 a_2 ,\alpha_1 \alpha_2 \Rightarrow a_1 \alpha_1 , a_2 \alpha_2$ as well as the similar K-swaps operations for the other two edge pairs provides a new realization $G'$. When performing the three swaps, no green and no blue edges change. In the new symmetric difference $\nabla'$ with $\mathcal{J}$, the cycles $C$ and $D$ are substituted with the three red-blue cycles $\Gamma, \Sigma$ and $\Lambda$ as depicted in Figures~\ref{fig:simp2} and \ref{fig:simp25}. We call this operation {\bf Process $\diamondsuit$.}

\begin{figure}[h]
\subfloat{
\begin{tikzpicture}
\draw [line width=.2,gray] (1.3,.5) -- (2.8,.5);
\draw [line width=.2,gray] (1.3,1.5) -- (2.8,1.5);
\draw [line width=.2,gray] (.8,2) .. controls (2,2.2) .. (3.3,2);
\draw [line width=.2,gray] (0,2)  .. controls (1.6,2.5) and (2.6,2.5) .. (4.1,2);
\draw [line width=.2,gray] (.8,0)  .. controls (2,-.2) ..  (3.3,0);
\draw [line width=.2,gray] (0,0)  .. controls (1.6,-.8) and (2.6,-.8) ..  (4.1,0);

\draw [blue,dashed](0,0) -- (0,2);
\draw [red] (0,2) -- (.8,2);
\draw [blue,dashed] (.8,2) -- (1.3,1.5);
\draw [red] (1.3,1.5) -- (1.3,.5);
\draw [blue,dashed] (1.3,.5) -- (.8,0);
\draw [red] (.8,0) -- (0,0);
\draw[color=black,fill=black] (0,0) node () [label=left:{$\scriptstyle a_1$}] {} circle (1pt);
\draw[color=black,fill=black] (0,2) node () [label=left:{$\scriptstyle c_2$}] {}circle (1pt);
\draw [color=black,fill=black] (.8,2) node () [label=above:{$\scriptstyle c_1$}] {} circle (1pt);
\draw [color=black,fill=black] (1.3,1.5) node () [label=left:{$\scriptstyle b_2$}] {}circle (1pt);
\draw [color=black,fill=black] (1.3,.5) node () [label=left:{$\scriptstyle b_1$}] {}circle (1pt);
\draw [color=black,fill=black] (.8,0) node () [label=below:{$\scriptstyle a_2$}] {} circle (1pt);
\node at (.5,1) {\Large C};
\node at (3.5,1) {\Large D};
\draw [blue,dashed](4.1,0) -- (4.1,2);
\draw [red] (3.3,2) -- (4.1,2);
\draw [blue,dashed] (3.3,2) -- (2.8,1.5);
\draw [red] (2.8,1.5) -- (2.8,.5);
\draw [blue,dashed] (2.8,.5) -- (3.3,0);
\draw [red] (3.3,0) -- (4.1,0);
\draw[color=black,fill=black] (4.1,0) node () [label=right:{$\scriptstyle \alpha_1$}] {} circle (1pt);
\draw[color=black,fill=black] (4.1,2) node () [label=above:{$\scriptstyle \gamma_2$}] {}circle (1pt);
\draw [color=black,fill=black] (3.3,2) node () [label=above:{$\scriptstyle \gamma_1$}] {} circle (1pt);
\draw [color=black,fill=black] (2.8,1.5) node () [label=right:{$\scriptstyle \beta_2$}] {}circle (1pt);
\draw [color=black,fill=black] (2.8,.5) node () [label=right:{$\scriptstyle \beta_1$}] {}circle (1pt);
\draw [color=black,fill=black] (3.3,0) node () [label=below:{$\scriptstyle \alpha_2$}] {} circle (1pt);
\end{tikzpicture}
    }
\quad \raisebox{1.5cm}{$\displaystyle\Rightarrow$} \quad
\subfloat{
\begin{tikzpicture}
\draw [red] (1.3,.5) -- (2.8,.5);
\draw [red] (1.3,1.5) -- (2.8,1.5);
\draw [red] (.8,2) .. controls (2,2.2) .. (3.3,2);
\draw [red] (0,2)  .. controls (1.6,2.5) and (2.6,2.5) .. (4.1,2);
\draw [red] (.8,0)  .. controls (2,-.2) ..  (3.3,0);
\draw [red] (0,0)  .. controls (1.6,-.8) and (2.6,-.8) ..  (4.1,0);

\draw [blue,dashed](0,0) -- (0,2);
\draw [line width=.2,gray] (0,2) -- (.8,2);
\draw [blue,dashed] (.8,2) -- (1.3,1.5);
\draw [line width=.2,gray] (1.3,1.5) -- (1.3,.5);
\draw [blue,dashed] (1.3,.5) -- (.8,0);
\draw [line width=.2,gray] (.8,0) -- (0,0);
\draw[color=black,fill=black] (0,0) node () [label=left:{$\scriptstyle a_1$}] {} circle (1pt);
\draw[color=black,fill=black] (0,2) node () [label=left:{$\scriptstyle c_2$}] {}circle (1pt);
\draw [color=black,fill=black] (.8,2) node () [label=above:{$\scriptstyle c_1$}] {} circle (1pt);
\draw [color=black,fill=black] (1.3,1.5) node () [label=left:{$\scriptstyle b_2$}] {}circle (1pt);
\draw [color=black,fill=black] (1.3,.5) node () [label=left:{$\scriptstyle b_1$}] {}circle (1pt);
\draw [color=black,fill=black] (.8,0) node () [label=below:{$\scriptstyle a_2$}] {} circle (1pt);
\draw [blue,dashed](4.1,0) -- (4.1,2);
\draw [line width=.2,gray] (3.3,2) -- (4.1,2);
\draw [blue,dashed] (3.3,2) -- (2.8,1.5);
\draw [line width=.2,gray] (2.8,1.5) -- (2.8,.5);
\draw [blue,dashed] (2.8,.5) -- (3.3,0);
\draw [line width=.2,gray] (3.3,0) -- (4.1,0);
\draw[color=black,fill=black] (4.1,0) node () [label=right:{$\scriptstyle \alpha_1$}] {} circle (1pt);
\draw[color=black,fill=black] (4.1,2) node () [label=above:{$\scriptstyle \gamma_2$}] {}circle (1pt);
\draw [color=black,fill=black] (3.3,2) node () [label=above:{$\scriptstyle \gamma_1$}] {} circle (1pt);
\draw [color=black,fill=black] (2.8,1.5) node () [label=right:{$\scriptstyle \beta_2$}] {}circle (1pt);
\draw [color=black,fill=black] (2.8,.5) node () [label=right:{$\scriptstyle \beta_1$}] {}circle (1pt);
\draw [color=black,fill=black] (3.3,0) node () [label=below:{$\scriptstyle \alpha_2$}] {} circle (1pt);
\end{tikzpicture}
    }
\caption{{\bf Process $\diamondsuit$}: Two cycles K-swapped into three.}
\label{fig:simp2}
\end{figure}
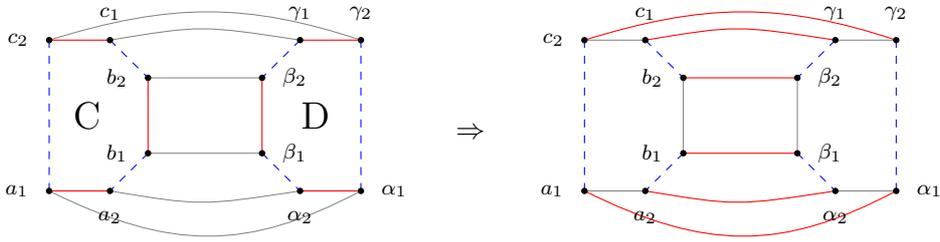

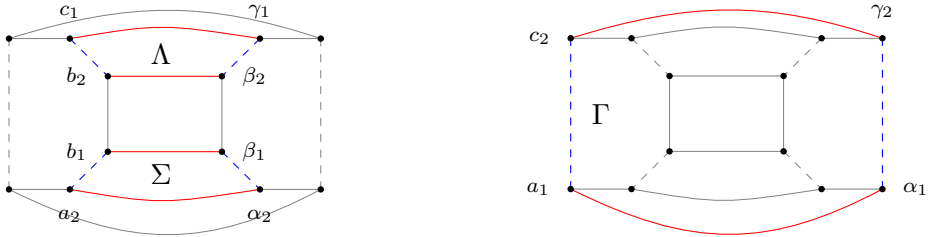
\begin{figure}[h]
\subfloat{
\begin{tikzpicture}
\draw [red] (1.3,.5) -- (2.8,.5);
\draw [red] (1.3,1.5) -- (2.8,1.5);
\draw [red] (.8,2) .. controls (2,2.2) .. (3.3,2);
\draw [line width=.2,gray] (0,2)  .. controls (1.6,2.5) and (2.6,2.5) .. (4.1,2);
\draw [red] (.8,0)  .. controls (2,-.2) ..  (3.3,0);
\draw [line width=.2,gray] (0,0)  .. controls (1.6,-.8) and (2.6,-.8) ..  (4.1,0);
\draw [line width=.2,gray,dashed](0,0) -- (0,2);
\draw [line width=.2,gray] (0,2) -- (.8,2);
\draw [blue,dashed] (.8,2) -- (1.3,1.5);
\draw [line width=.2,gray] (1.3,1.5) -- (1.3,.5);
\draw [blue,dashed] (1.3,.5) -- (.8,0);
\draw [line width=.2,gray] (.8,0) -- (0,0);
\draw[color=black,fill=black] (0,0) node () [label=left:{}] {} circle (1pt);
\draw[color=black,fill=black] (0,2) node () [label=left:{}] {}circle (1pt);
\draw [color=black,fill=black] (.8,2) node () [label=above:{$\scriptstyle c_1$}] {} circle (1pt);
\draw [color=black,fill=black] (1.3,1.5) node () [label=left:{$\scriptstyle b_2$}] {}circle (1pt);
\draw [color=black,fill=black] (1.3,.5) node () [label=left:{$\scriptstyle b_1$}] {}circle (1pt);
\draw [color=black,fill=black] (.8,0) node () [label=below:{$\scriptstyle a_2$}] {} circle (1pt);
\node at (2,.2) {$\displaystyle  \Sigma$};
\node at (2,1.8) {$\displaystyle \Lambda$};
\draw [line width=.2,gray,dashed](4.1,0) -- (4.1,2);
\draw [line width=.2,gray] (3.3,2) -- (4.1,2);
\draw [blue,dashed] (3.3,2) -- (2.8,1.5);
\draw [line width=.2,gray] (2.8,1.5) -- (2.8,.5);
\draw [blue,dashed] (2.8,.5) -- (3.3,0);
\draw [line width=.2,gray] (3.3,0) -- (4.1,0);
\draw[color=black,fill=black] (4.1,0) node () [label=right:{}] {} circle (1pt);
\draw[color=black,fill=black] (4.1,2) node () [label=above:{}] {}circle (1pt);
\draw [color=black,fill=black] (3.3,2) node () [label=above:{$\scriptstyle \gamma_1$}] {} circle (1pt);
\draw [color=black,fill=black] (2.8,1.5) node () [label=right:{$\scriptstyle \beta_2$}] {}circle (1pt);
\draw [color=black,fill=black] (2.8,.5) node () [label=right:{$\scriptstyle \beta_1$}] {}circle (1pt);
\draw [color=black,fill=black] (3.3,0) node () [label=below:{$\scriptstyle \alpha_2$}] {} circle (1pt);
\end{tikzpicture}
    }
\qquad \qquad \quad
\subfloat{
\begin{tikzpicture}
\draw [line width=.2,gray] (1.3,.5) -- (2.8,.5);
\draw [line width=.2,gray] (1.3,1.5) -- (2.8,1.5);
\draw [line width=.2,gray] (.8,2) .. controls (2,2.2) .. (3.3,2);
\draw [red] (0,2)  .. controls (1.6,2.5) and (2.6,2.5) .. (4.1,2);
\draw [line width=.2,gray] (.8,0)  .. controls (2,-.2) ..  (3.3,0);
\draw [red] (0,0)  .. controls (1.6,-.8) and (2.6,-.8) ..  (4.1,0);
\draw [blue,dashed](0,0) -- (0,2);
\draw [line width=.2,gray] (0,2) -- (.8,2);
\draw [line width=.2,gray,dashed] (.8,2) -- (1.3,1.5);
\draw [line width=.2,gray] (1.3,1.5) -- (1.3,.5);
\draw [line width=.2,gray,dashed] (1.3,.5) -- (.8,0);
\draw [line width=.2,gray] (.8,0) -- (0,0);
\draw[color=black,fill=black] (0,0) node () [label=left:{$\scriptstyle a_1$}] {} circle (1pt);
\draw[color=black,fill=black] (0,2) node () [label=left:{$\scriptstyle c_2$}] {}circle (1pt);
\draw [color=black,fill=black] (.8,2) node () [label=above:{}] {} circle (1pt);
\draw [color=black,fill=black] (1.3,1.5) node () [label=left:{}] {}circle (1pt);
\draw [color=black,fill=black] (1.3,.5) node () [label=left:{}] {}circle (1pt);
\draw [color=black,fill=black] (.8,0) node () [label=below:{}] {} circle (1pt);
\node at (.4,1) {$\displaystyle \Gamma$};
\draw [blue,dashed](4.1,0) -- (4.1,2);
\draw [line width=.2,gray] (3.3,2) -- (4.1,2);
\draw [line width=.2,gray,dashed] (3.3,2) -- (2.8,1.5);
\draw [line width=.2,gray] (2.8,1.5) -- (2.8,.5);
\draw [line width=.2,gray,dashed] (2.8,.5) -- (3.3,0);
\draw [line width=.2,gray] (3.3,0) -- (4.1,0);
\draw[color=black,fill=black] (4.1,0) node () [label=right:{$\scriptstyle \alpha_1$}] {} circle (1pt);
\draw[color=black,fill=black] (4.1,2) node () [label=above:{$\scriptstyle \gamma_2$}] {}circle (1pt);
\draw [color=black,fill=black] (3.3,2) node () [label=above:{}] {} circle (1pt);
\draw [color=black,fill=black] (2.8,1.5) node () [label=right:{}] {}circle (1pt);
\draw [color=black,fill=black] (2.8,.5) node () [label=right:{}] {}circle (1pt);
\draw [color=black,fill=black] (3.3,0) node () [label=below:{}] {} circle (1pt);
\end{tikzpicture}
    }
\caption{{\bf Process $\diamondsuit$}: The three new cycles}
\label{fig:simp25}
\end{figure}

Assume now that we cannot execute Process~$\diamondsuit$ for cycles $C$ and $D$. Then every set of three red edges in $C$ must be connected with a green edge to every set of three red edges in $D$. How many green edges are necessary to achieve that? Instead of answering this question in full generality, we will use a relatively easy lower bound of the number of required green edges.

Assume now that there is exactly one green edge between $C$ and $D$ with endpoints in these triples of edges. Then maybe the Process~$\diamondsuit$ cannot execute on those edges as we described above. Without loss of generality, we may assume that this green edge is $b_1\beta_1$. But then consider the other possible 1-factor between end points $b_1, b_2$ and $\beta_1, \beta_2$. If we consider the opposite orientation of the edges of cycle $D$, then Process~$\diamondsuit$ goes through without the slightest problem. So we need at least two green edges between those three red edges in cycle $C$ and the other three edges in cycle $D$ to deny this process.

\medskip\noindent So assume that our cycle $C$ has maximum length $L=2a$ and $D=2b \ge 6$.
\begin{prop}\label{th:noD}
Assume that Process~$\diamondsuit$ cannot be applied for cycles $C$ and $D$. Then there are at least $N=(a-1) b/3$ green edges between the cycles.
\end{prop}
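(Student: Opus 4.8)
The plan is to convert the geometric obstruction into a double-counting estimate over the $3$-element sets of red edges of $D$. I start by fixing notation. Since $C$ and $D$ are alternating red--blue cycles, every vertex of $C$ (resp.\ of $D$) is an endpoint of exactly one red edge of its cycle; write $e_1,\dots,e_a$ for the red edges of $C$ and $f_1,\dots,f_b$ for the red edges of $D$. A green edge joining $C$ to $D$ then has its $C$-endpoint on a unique $e_i$ and its $D$-endpoint on a unique $f_j$. For a $3$-element set $T\subseteq\{f_1,\dots,f_b\}$ and an index $i$, let $n_i(T)$ be the number of green edges that have one endpoint on $e_i$ and the other endpoint on some edge of $T$, and let $g$ be the total number of green edges between $C$ and $D$; thus for every $T$ the sum $\sum_{i=1}^a n_i(T)$ counts the green edges running from $C$ to the six endpoints of the edges of $T$.

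Next I invoke the refined obstruction established just above the proposition: if Process~$\diamondsuit$ cannot be applied to $C$ and $D$, then for every triple $S$ of red edges of $C$ and every triple $T$ of red edges of $D$ there are at least two green edges between the endpoints of $S$ and the endpoints of $T$ --- the two opposite orientations of $D$ give Process~$\diamondsuit$ swaps whose new edges form disjoint sets, so one green edge cannot block both. In our notation this reads $\sum_{e_i\in S}n_i(T)\ge 2$ for every $3$-element $S$. Taking $S$ to consist of the three indices $i$ with the smallest $n_i(T)$, we get that the three smallest among $n_1(T),\dots,n_a(T)$ add up to at least $2$.

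From this I extract, for every $T$, the bound $\sum_{i=1}^a n_i(T)\ge a-1$. Order the values as $n_{(1)}(T)\le\cdots\le n_{(a)}(T)$. If $n_{(2)}(T)\ge 1$, then $n_{(2)}(T),\dots,n_{(a)}(T)$ are $a-1$ values each at least $1$. If $n_{(2)}(T)=0$, then $n_{(1)}(T)=0$ as well, so the ``three smallest add to $\ge 2$'' condition forces $n_{(3)}(T)\ge 2$, whence $n_{(3)}(T),\dots,n_{(a)}(T)$ are $a-2$ values each at least $2$. Because $C$ is a longest cycle and $|D|=2b\ge 6$, we have $2a\ge 2b\ge 6$, so $a\ge 3$ and $2(a-2)\ge a-1$; in both cases $\sum_{i=1}^a n_i(T)\ge a-1$.

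Finally I sum this inequality over all $\binom{b}{3}$ choices of $T$. The left-hand side equals $\sum_h |\{T : f_{j(h)}\in T\}|$, where $h$ runs over the $g$ green edges between $C$ and $D$ and $f_{j(h)}$ is the red edge of $D$ carrying the $D$-endpoint of $h$; since each $f_j$ belongs to exactly $\binom{b-1}{2}$ of the triples, this is $g\binom{b-1}{2}$. Hence $g\binom{b-1}{2}\ge(a-1)\binom{b}{3}$, and as $\binom{b}{3}/\binom{b-1}{2}=b/3$ this yields $g\ge(a-1)b/3=N$, which is the assertion. The only step carrying any weight is the implication ``every triple-pair sees at least two green edges'' $\Rightarrow$ ``$\sum_i n_i(T)\ge a-1$''; the subtle branch is $n_{(1)}(T)=n_{(2)}(T)=0$, where one genuinely needs $n_{(3)}(T)\ge 2$ together with $a\ge 3$, and one should also check that the two green edges guaranteed by the obstruction are distinct so that the counting is legitimate. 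The remaining steps are routine bookkeeping.
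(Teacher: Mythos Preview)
Your proof is correct, and it is a cleaner route than the one in the paper. Both arguments rest on the same starting fact (established just above the proposition): for every triple $S$ of red edges of $C$ and every triple $T$ of red edges of $D$ there must be at least two green edges between their endpoints, because the two orientations of $D$ give two applications of Process~$\diamondsuit$ with disjoint sets of required non-edges. From this both proofs deduce that each triple $T$ in $D$ sees at least $a-1$ green edges from $C$; your derivation of this step (via the ordered values $n_{(1)}(T)\le\cdots\le n_{(a)}(T)$ and the dichotomy $n_{(2)}(T)\ge 1$ versus $n_{(2)}(T)=0$) is in fact more explicit than the paper's.

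The difference is in how this per-triple bound is aggregated. The paper partitions the $b$ red edges of $D$ into $\lfloor b/3\rfloor$ \emph{disjoint} triplets, so that the green edges counted by different triplets are automatically distinct, and then does a separate case analysis for $b\equiv 0,1,2\pmod 3$ to handle the leftover one or two edges. You instead sum the inequality $\sum_i n_i(T)\ge a-1$ over \emph{all} $\binom{b}{3}$ triples and observe that each green edge is counted exactly $\binom{b-1}{2}$ times; dividing out gives $g\ge (a-1)\binom{b}{3}/\binom{b-1}{2}=(a-1)b/3$ in one stroke. Your averaging argument buys uniformity (no residue cases) and is shorter; the paper's partition argument is more elementary in that it never double-counts, but pays for it with the modular case split.
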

\begin{proof}
Partition the red edges of $D$ into as many triplets as possible. Depending on the remainder of $b$ modulo 3, we distinguish three cases.
\begin{enumerate}[{\rm (1)}]
\setcounter{enumi}{-1}
\item In this case $N$ is well defined, and the statement follows from the reasoning above.
\item Fix a triplet $T$ and consider the edge $e$ in $D$ that does not belong to any triplet.  We know that there are at least $a-1$ green edges between $T$ and $C$. Fix a set $S$ of $a-1$ edges from these green edges. By the pigeonhole principle, there are two edges from $T$, say $\varepsilon$ and $\varphi$, that together they are connected to $C$ with no more than $2(a-1)/3$ green edges from $S$. Therefore there are at last $a - 2(a-1)/3$ red edges in $C$ which are not connected at all to any of $e, \varepsilon, \varphi$. This explains why we need $(a-1)/3$ extra green edges from this triplet to $C$.
\item In this case, let $e$ and $f$ be the edges that are not contained in any triplet. Now in our fixed triplet $T$ there is an edge $\varepsilon$ with not more than $(a-1)/3$ green edges to $C$ (from $S$). Considering the other $a-(a-1)/)3$ red edges from $C$ and the triplet $e,f,\varepsilon$, we need $a-(a-1)/3 -2$ more green edges connecting them to $C$.
\end{enumerate}
\end{proof}
\noindent So if Process~$\diamondsuit$ does not apply for $C$ and $D$, we have at least $\frac{(L/2-1)(|D|/2)}{3}$ green edges between them. Denote
\begin{equation}\label{eq:sum}
Z:= \sum \left \{ |D| : D \mbox{ is a cycle in } (\nabla \setminus C),\  |D|\ge 6 \right \}.
\end{equation}
If Process $\diamondsuit$ cannot be applied for $C$ and for any other {\bf long} cycle (cycles of length at least 6) then at least
\begin{equation}\label{eq:noD}
\frac{(L/2-1)(Z/2)}{3}
\end{equation}
green edges are within $C$ and between $C$ and the other long cycles.

\medskip\noindent
Assume that neither Process $\clubsuit$ nor $\diamondsuit$ apply for cycle $C$. Then, by inequality (\ref{eq:elig-green}), denying $\clubsuit$ requires $L^2 /8 - 3L/4$ well-placed eligible green edges in $C$. Each of these edges is adjacent with two vertices from $C$. Therefore it uses $L^2/4 - 3L/2$ degrees from $C$.

Putting together this with the bound in (\ref{eq:noD}): if we cannot increase the number of cycles in $\nabla$ by applying Processes $\clubsuit$ or $\diamondsuit$ for $C$ then we have:

\begin{align}\label{eq:summing}
&\frac{(L/2-1)(Z/2)}{3} + \frac{L^2}{4} - \frac{3L}{2} \le L\Delta, \quad\mbox{so} \nonumber \\[2pt]
&(L-2) Z \le L(12\Delta - 3L + 18).
\end{align}
For convenience we will use a slightly weaker upper bound pair on $Z$.
\begin{equation}\label{eq:noDa}
Z \le \begin{cases}
16 \Delta -4L +24 < 16 \Delta & \text{if } L\ge 8 \\
18\Delta & \text{if } L=6.
\end{cases}
\end{equation}

\bigskip\noindent
As we proceed, we will use the 4-cycles in $\nabla$ to define a third process that has two possible purposes. This process may decrease the number of eligible green chords for the case $L=6$. Further, if the maximum cycle length $L\ge 8$ then it will decrease the number of cycles in $\nabla$ of (maximal) size $L$ by 1.

Let $e$ and $f$ be red edges on $C$, and let $\varphi$ and $\varepsilon$ be the red edges on the 4-cycle $D$.  Either K-swap on the pair $f,\varphi$ with the corresponding non-edges (one such pair is shown by grey edges in Figure~\ref{fig:simp3}\subref{s2}) will combine $C$ and $D$ into one cycle, which we will call $C'$.  Let $e=aa'$ and $\varepsilon=bb'$ and assume without loss of generality that $a,a',b',b$ appear in that order when traversing $C$ from $a$ to $b$.  Performing the $K$-swap on $e$ and $\varepsilon$ with the non-edges $a'b'$ and $ab$ then results in two alternating cycles.  We call the execution of these two K-swaps {\bf Process $\spadesuit$}.

We then have the following.

\begin{prop}\label{th:4cycle}
Let $D$ be a 4-cycle in $\nabla$.
\begin{enumerate}[{\rm (i)}]
\item Assume that $L=6$ and let $g$ be a green eligible chord in $C$, furthermore assume there is no green edge between $C$ and $D$. Process $\spadesuit$ then transforms $\I$ into $\I'$ such that the new $\nabla'$ has one less green eligible chord.
\item Assume that $L\ge 8$, and there are two nonconsecutive red edges $e, f$ on $C$ with no green edges from those to $D$. Then applying process $\spadesuit$ decreases the number of cycles of length $L$ in $\nabla$. (Then the number of eligible green edges even may increase.)
\end{enumerate}
\end{prop}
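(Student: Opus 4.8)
\medskip\noindent\textsc{Proof proposal.}
The plan is to make Process~$\spadesuit$ fully explicit and then reduce each part to bookkeeping: of cycle lengths in~(ii), and of which vertices of $C$ migrate into which of the two new cycles in~(i). I would first fix notation. Write $f=xy$, and let the $4$-cycle $D$ have red edges $\varphi,\varepsilon$ and blue edges $\beta_1,\beta_2$. The first K-swap of Process~$\spadesuit$ deletes $f$ and $\varphi$ from $\I$ and inserts, in their place, two edges each joining an endpoint of $f$ to an endpoint of $\varphi$. Since $C$ and $D$ are vertex-disjoint and, in $D$, the blue (resp.\ red) neighbour of a vertex is its $\J$-partner (resp.\ $\I$-partner), neither new edge lies in $\I$ or $\J$; by the green-edge hypothesis (no green edge from $\{x,y\}$ to $D$ in~(ii), no green edge between $C$ and $D$ at all in~(i)) neither lies in $G$; and both deleted edges lie in $\I$, so the swap is automatically Kundu-restricted. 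Thus it is a legitimate K-swap, and a parity check shows it merges $C$ and $D$ into a single alternating cycle $C'$ of length $L+4$, in which the four vertices of $D$ form a block between $x$ and $y$. The second K-swap $e,\varepsilon\Rightarrow a'b',ab$ is handled the same way (its new edges again run from an endpoint of $e$ to a vertex of $D$), and splits $C'$ into the two alternating cycles $\Sigma$ (the arc of $C'$ from $a'$ to $b'$, closed by $a'b'$) and $\Gamma$ (the arc from $b$ to $a$, closed by $ab$). Throughout, $G$ and $\J$ are untouched.

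For part~(ii) I would then count lengths. Deleting $f$ from $C$ leaves a path of length $L-1$ on which $e$ lies; let $\ell_1$ (resp.\ $\ell_2$) be the distance along it from $x$ (resp.\ $y$) to the nearer endpoint of $e$, so that $\ell_1+\ell_2=L-2$. Tracing through the construction gives $|\Sigma|=\ell_1+3$ and $|\Gamma|=\ell_2+3$, the extra $3$ in each case being the new chord plus the two edges of $D$ absorbed near $f$. Since $e$ and $f$ are non-consecutive on $C$, each of the two sub-paths begins and ends with a blue edge and contains a red edge, hence has length at least $3$; so $\ell_1,\ell_2\in[3,L-5]$ and therefore $|\Sigma|,|\Gamma|\le L-2<L$. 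As $|D|=4<L$ and all other cycles of $\nabla$ are unchanged, Process~$\spadesuit$ destroys the length-$L$ cycle $C$ and creates no new one, which is the claim.

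For part~(i), where $L=6$, I would track vertices. The eligible chords of the alternating hexagon $C$ are exactly its three long diagonals, each of which joins an endpoint of one red edge of $C$ to an endpoint of a different red edge; hence the given green eligible chord $g$ determines a pair of red edges of $C$. I would choose $e,f$ among the three red edges so that $\{e,f\}$ is \emph{not} that pair (possible, since a pair of the three red edges leaves one free). The length count above, now with $\ell_1=1$ and $\ell_2=3$, makes $\Sigma$ a $4$-cycle and $\Gamma$ a hexagon, and shows that $V(\Sigma)\cap V(C)$ is exactly the pair of $C$-vertices joined by the unique blue edge of $C$ having one endpoint on $e$ and the other on $f$ --- that is, one endpoint of $e$ and one endpoint of $f$. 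Since $g$ joins the third red edge to one of $e,f$, one endpoint of $g$ is precisely this migrating endpoint and so lies in $\Sigma$, while its other endpoint (as $g$ is a chord, not an edge, of $C$) lies in $\Gamma$; hence $g$ now joins $\Sigma$ to $\Gamma$, is a chord of neither, and has ceased to be an eligible green chord. It remains to check that no new eligible green chord is created: $\Sigma$ is a $4$-cycle and has none, and of the three long diagonals of $\Gamma$ the two $D$-vertices are adjacent so that one diagonal joins two vertices of $C$ --- and it was already a diagonal of $C$, so its being green or not is unchanged --- while the other two join a vertex of $C$ to a vertex of $D$ and so are non-green by hypothesis; all other cycles, and $G$ itself, are unaffected. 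Thus the number of eligible green chords strictly drops.

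The genuinely delicate step, and the one I would dwell on, is the vertex bookkeeping in~(i): reading off correctly from the cyclic order of $C'$ exactly which endpoints of $e$ and $f$ land in the small cycle $\Sigma$, and checking that the freedom in choosing $\{e,f\}$ (and, if needed, the choice between the two orientations of the first K-swap) always forces $g$ across the $\Sigma/\Gamma$ split while the non-green hypothesis simultaneously keeps the two cross diagonals of $\Gamma$ from being green. The length count in~(ii) is routine apart from the elementary point that non-consecutive red edges force $\ell_1,\ell_2\ge 3$.
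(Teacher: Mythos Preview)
Your proposal is correct and follows the same approach as the paper's own proof: for part~(i) you choose $e,f$ so that the green chord $g$ ends up straddling the two output cycles, and for part~(ii) you do the length bookkeeping showing both output cycles are strictly shorter than $L$. Your condition ``$\{e,f\}$ is not the pair of red edges determined by $g$'' is exactly the paper's ``$e$ and $f$ on opposite sides of $g$'' specialised to $L=6$, so the two arguments coincide. The paper's proof is essentially a one-line reference to Figures~3 and~4; you have supplied the explicit length count ($|\Sigma|=\ell_1+3$, $|\Gamma|=\ell_2+3$, with $\ell_1,\ell_2\ge 3$ from non-consecutiveness) and the explicit check that the two $C$--$D$ diagonals of the new hexagon $\Gamma$ are non-green while its one $C$--$C$ diagonal was already an eligible chord of $C$, so no new green eligible chord is created --- details the paper leaves to the picture.

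The one point you flag as delicate --- that $g$'s endpoint on the red edge it shares with $\{e,f\}$ coincides with the migrating endpoint of that red edge --- is indeed the crux, and it does hold: in the hexagon each long diagonal $v_iv_{i+3}$ meets its two red edges precisely at their endpoints adjacent (via blue) to the \emph{third} red edge, and that third red edge is in $\{e,f\}$ by your choice, so those endpoints are exactly the ones on the short blue arc between $e$ and $f$. This is the verification your final paragraph promises, and it goes through without needing the orientation freedom of the first K-swap.
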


\begin{proof}
To establish (i), we choose $e$ and $f$ to be red edges on opposite sides of an eligible green chord $g$, as depicted in Figure \ref{fig:simp3}\subref{s2}.  Part (ii) is an immediate consequence of the Process as defined, although we note that in this case we may not reduce the number of eligible chords.
\end{proof}

\begin{figure}[H]
\centering
\subfloat [][There are 2 disjoint R/B cycles] {\label{s1}
\begin{tikzpicture}
\draw [red] (0,0) -- (-.5,.5);
\node  at (-.2,.4) {$\scriptstyle e$};
\draw [blue, dashed] (-.5,.5) -- (-.5,1.5);
\draw [red](-.5,1.5) -- (0,2);
\draw [blue, dashed] (0,2) -- (.8,2);
\draw [red] (.8,2) -- (1.3,1.5);
\node  at (1,1.6) {$\scriptstyle f$};
\draw [blue, dashed] (1.3,1.5) -- (1.3,.5);
\draw [red] (1.3,.5) -- (.8,0);
\draw [blue, dashed] (.8,0) -- (0,0);
\draw [green!50!black] (0,0) -- (0,2);
\node  at (.15,1) {$\scriptstyle g$};
\draw[color=black,fill=black] (0,0) circle (1pt);
\draw [color=black,fill=black] (-.5,.5) circle (1pt);
\draw[color=black,fill=black] (-.5,1.5) circle (1pt);
\draw[color=black,fill=black] (0,2) circle (1pt);
\draw [color=black,fill=black] (.8,2) circle (1pt);
\draw [color=black,fill=black] (1.3,1.5) circle (1pt);
\draw [color=black,fill=black] (1.3,.5) circle (1pt);
\draw [color=black,fill=black] (.8,0) circle (1pt);
\draw [line width=.2,gray] (.8,2) .. controls (1.9,2) .. (3.5,1.5);
\draw [line width=.2,gray] (1.3,1.5) -- (2.5,1.5);
\draw [line width=.2,gray] (0,0) .. controls (1,-.2) and (1.9,-.2) ..  (2.5,.5);
\draw [line width=.2,gray] (-.5,.5)  .. controls (.5,.5) and (1.9,-.5) ..   (3.5,.5);
\draw [red] (2.5,1.5) -- (3.5,1.5);
\node  at (3,1.3) {$\scriptstyle \varphi$};
\draw [red] (2.5,.5) -- (3.5,.5);
\node  at (3,.7) {$\scriptstyle \varepsilon$};
\draw [blue, dashed] (2.5,.5) -- (2.5,1.5);
\draw [blue, dashed] (3.5,.5) -- (3.5,1.5);

\draw [color=black,fill=black] (2.5,.5) circle (1pt);
\draw [color=black,fill=black] (2.5,1.5) circle (1pt);
\draw [color=black,fill=black] (3.5,1.5) circle (1pt);
\draw [color=black,fill=black] (3.5,.5) circle (1pt);
\end{tikzpicture}
    }
    \qquad\qquad
\subfloat [][After the first K-swap all R/B edges belong to one cycle] {\label{s2}
\begin{tikzpicture}
\draw [red] (0,0) -- (-.5,.5);
\node  at (-.2,.4) {$\scriptstyle e$};
\draw [blue, dashed] (-.5,.5) -- (-.5,1.5);
\draw [red](-.5,1.5) -- (0,2);
\draw [blue, dashed] (0,2) -- (.8,2);
\draw [line width=.2,gray] (.8,2) -- (1.3,1.5);
\node  at (1,1.6) {$\scriptstyle f$};
\draw [blue, dashed] (1.3,1.5) -- (1.3,.5);
\draw [red] (1.3,.5) -- (.8,0);
\draw [blue, dashed] (.8,0) -- (0,0);
\draw [green!50!black] (0,0) -- (0,2);
\node  at (.15,1) {$\scriptstyle g$};
\draw[color=black,fill=black] (0,0) circle (1pt);
\draw [color=black,fill=black] (-.5,.5) circle (1pt);
\draw[color=black,fill=black] (-.5,1.5) circle (1pt);
\draw[color=black,fill=black] (0,2) circle (1pt);
\draw [color=black,fill=black] (.8,2) circle (1pt);
\draw [color=black,fill=black] (1.3,1.5) circle (1pt);
\draw [color=black,fill=black] (1.3,.5) circle (1pt);
\draw [color=black,fill=black] (.8,0) circle (1pt);
\draw [red] (.8,2) .. controls (1.9,2) .. (3.5,1.5);
\draw [red] (1.3,1.5) -- (2.5,1.5);
\draw [line width=.2,gray] (0,0) .. controls (1,-.2) and (1.9,-.2) ..  (2.5,.5);
\draw [line width=.2,gray] (-.5,.5)  .. controls (.5,.5) and (1.9,-.5) ..   (3.5,.5);
\draw [line width=.2,gray] (2.5,1.5) -- (3.5,1.5);
\node  at (3,1.3) {$\scriptstyle \varphi$};
\draw [red] (2.5,.5) -- (3.5,.5);
\node  at (3,.7) {$\scriptstyle \varepsilon$};
\draw [blue, dashed] (2.5,.5) -- (2.5,1.5);
\draw [blue, dashed] (3.5,.5) -- (3.5,1.5);

\draw [color=black,fill=black] (2.5,.5) circle (1pt);
\draw [color=black,fill=black] (2.5,1.5) circle (1pt);
\draw [color=black,fill=black] (3.5,1.5) circle (1pt);
\draw [color=black,fill=black] (3.5,.5) circle (1pt);
\end{tikzpicture}
    }
\caption{{\bf Process $\spadesuit$}:\ Decreasing the number of green eligible chords.}
\label{fig:simp3}
\end{figure}
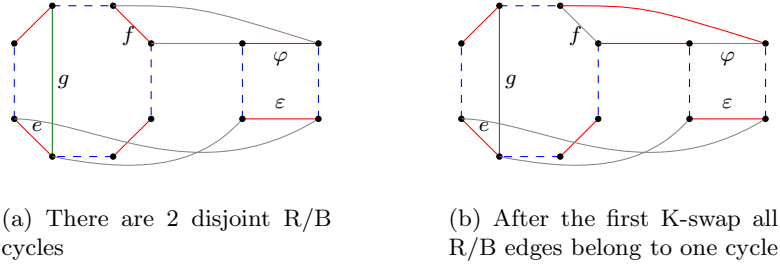
\begin{figure}[H]
\centering
\subfloat{
\begin{tikzpicture}
\draw [line width=.2,gray] (0,0) -- (-.5,.5);
\node  at (-.2,.4) {$\scriptstyle e$};
\draw [line width=.2,gray, dashed] (-.5,.5) -- (-.5,1.5);
\draw [line width=.2,gray](-.5,1.5) -- (0,2);
\draw [line width=.2,gray, dashed] (0,2) -- (.8,2);
\draw [line width=.2,gray] (.8,2) -- (1.3,1.5);
\node  at (1,1.6) {$\scriptstyle f$};
\draw [blue, dashed] (1.3,1.5) -- (1.3,.5);
\draw [red] (1.3,.5) -- (.8,0);
\draw [blue, dashed] (.8,0) -- (0,0);
\draw [green!50!black] (0,0) -- (0,2);
\node  at (.15,1) {$\scriptstyle g$};
\draw[color=black,fill=black] (0,0) circle (1pt);
\draw [color=black,fill=black] (-.5,.5) circle (1pt);
\draw[color=black,fill=black] (-.5,1.5) circle (1pt);
\draw[color=black,fill=black] (0,2) circle (1pt);
\draw [color=black,fill=black] (.8,2) circle (1pt);
\draw [color=black,fill=black] (1.3,1.5) circle (1pt);
\draw [color=black,fill=black] (1.3,.5) circle (1pt);
\draw [color=black,fill=black] (.8,0) circle (1pt);
\draw [line width=.2,gray] (.8,2) .. controls (1.9,2) .. (3.5,1.5);
\draw [red] (1.3,1.5) -- (2.5,1.5);
\draw [red] (0,0) .. controls (1,-.2) and (1.9,-.2) ..  (2.5,.5);
\draw [line width=.2,gray] (-.5,.5)  .. controls (.5,.5) and (1.9,-.5) ..   (3.5,.5);
\draw [line width=.2,gray] (2.5,1.5) -- (3.5,1.5);
\node  at (3,1.3) {$\scriptstyle \varphi$};
\draw [line width=.2,gray] (2.5,.5) -- (3.5,.5);
\node  at (3,.7) {$\scriptstyle \varepsilon$};
\draw [blue, dashed] (2.5,.5) -- (2.5,1.5);
\draw [line width=.2,gray, dashed] (3.5,.5) -- (3.5,1.5);

\draw [color=black,fill=black] (2.5,.5) circle (1pt);
\draw [color=black,fill=black] (2.5,1.5) circle (1pt);
\draw [color=black,fill=black] (3.5,1.5) circle (1pt);
\draw [color=black,fill=black] (3.5,.5) circle (1pt);
\end{tikzpicture}
    }
    \qquad\qquad
\subfloat{
\begin{tikzpicture}
\draw [line width=.2,gray] (0,0) -- (-.5,.5);
\node  at (-.2,.4) {$\scriptstyle e$};
\draw [blue, dashed] (-.5,.5) -- (-.5,1.5);
\draw [red](-.5,1.5) -- (0,2);
\draw [blue, dashed] (0,2) -- (.8,2);
\draw [line width=.2,gray] (.8,2) -- (1.3,1.5);
\node  at (1,1.6) {$\scriptstyle f$};
\draw [line width=.2,gray, dashed] (1.3,1.5) -- (1.3,.5);
\draw [line width=.2,gray] (1.3,.5) -- (.8,0);
\draw [line width=.2,gray, dashed] (.8,0) -- (0,0);
\draw [green!50!black] (0,0) -- (0,2);
\node  at (.15,1) {$\scriptstyle g$};
\draw[color=black,fill=black] (0,0) circle (1pt);
\draw [color=black,fill=black] (-.5,.5) circle (1pt);
\draw[color=black,fill=black] (-.5,1.5) circle (1pt);
\draw[color=black,fill=black] (0,2) circle (1pt);
\draw [color=black,fill=black] (.8,2) circle (1pt);
\draw [color=black,fill=black] (1.3,1.5) circle (1pt);
\draw [color=black,fill=black] (1.3,.5) circle (1pt);
\draw [color=black,fill=black] (.8,0) circle (1pt);
\draw [red] (.8,2) .. controls (1.9,2) .. (3.5,1.5);
\draw [line width=.2,gray] (1.3,1.5) -- (2.5,1.5);
\draw [line width=.2,gray] (0,0) .. controls (1,-.2) and (1.9,-.2) ..  (2.5,.5);
\draw [red] (-.5,.5)  .. controls (.5,.5) and (1.9,-.5) ..   (3.5,.5);
\draw [line width=.2,gray] (2.5,1.5) -- (3.5,1.5);
\node  at (3,1.3) {$\scriptstyle \varphi$};
\draw [line width=.2,gray] (2.5,.5) -- (3.5,.5);
\node  at (3,.7) {$\scriptstyle \varepsilon$};
\draw [line width=.2,gray, dashed] (2.5,.5) -- (2.5,1.5);
\draw [blue, dashed] (3.5,.5) -- (3.5,1.5);

\draw [color=black,fill=black] (2.5,.5) circle (1pt);
\draw [color=black,fill=black] (2.5,1.5) circle (1pt);
\draw [color=black,fill=black] (3.5,1.5) circle (1pt);
\draw [color=black,fill=black] (3.5,.5) circle (1pt);
\end{tikzpicture}
    }
\caption{{\bf Process $\spadesuit$}: The new R/B cycles in $\nabla'$. Edge $g$ is not eligible anymore.}
\label{fig:simp4}
\end{figure}
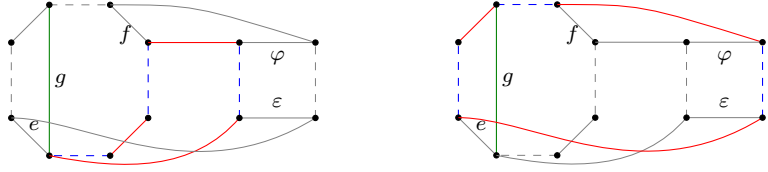

Suppose that $L\ge 8$, and that part (ii) of Proposition \ref{th:4cycle} does not apply to $C$ and some 4-cycle $D$ in $\nabla$, and also assume that some red edge $e$ in $C$ is not connected to $D$ by any green edge. Then all red edges in $C$, except at most one of the two neighboring red edges of $e$,  must be connected to $D$. Both red edges cannot be exempt, because then these two edges would be nonconsecutive on $C$. Therefore, at least $L/2 -2$ red edges on $C$ must have a green edge to $D$. The number of the 4-cycles in $\nabla$ is $F = (n - Z-L)/4$. If none of them is suitable Process $\spadesuit$, then at least
\begin{equation}
\frac{n - Z-L}{4} (L/2-2)\nonumber
\end{equation}
green edges must go from $C$ to 4-cycles. If we further maintain that Process $\diamondsuit$ does not hold for $C$, from (\ref{eq:noD}) we obtain an additional $\frac{(L/2-1)(Z/2)}{3}$ green edges incident to $C$.  We therefor have that
\begin{align}\label{eq:noP}
\frac{n - Z-L}{4} (L/2-2) + \frac{(L/2-1)(Z/2)}{3} &\le  L \Delta \nonumber \\[2pt]
3(n-Z-L)(L/2 -2) + 2 (L/2-1)Z &\le 12 \Delta  \nonumber \\[2pt]
Z(-L/2 + 4) + 3(n-L)(L/2-2) \le 12 L\Delta.
\end{align}
When $L=8$ then we have
\begin{displaymath}
n \le 12\Delta +8,
\end{displaymath}
which, due to our assumption on $\Delta$, does not hold.

When $L> 8$ the coefficient of $Z$ is negative, so substitute $Z$ with an upper bound decreases the LHS of (\ref{eq:noP}). From that the next inequality follows:
\begin{align}
3(n-L)(L/2-2) \le (20L-64)\Delta &< 20(L-3)\Delta \nonumber \\[2pt]
\frac{3}{2} (n-L) < 20 \frac{L-3}{L-4} \Delta &\le \frac{70}{3} \Delta. \nonumber
\end{align}
In the last line we used that $(L-3)/(L-4)$ is monotone decreasing as $L$ is increasing. Now assume that Process $\clubsuit$ does not apply for $C$, therefore $L \le 4\Delta+6$ by Proposition \ref{th:no-long-cycles}. Since on the LHS the coefficient of $L$ is negative, therefore substituting $L$ its its upper bound decreasing the LHS:
\begin{align}\label{eq:end}
\frac{3}{2}(n -4 \Delta -6) \le \frac{70}{6} \nonumber\\[2pt]
n-6 \le \frac {176}{9} \Delta.
\end{align}
Since by our assumption $\Delta \le n/\bound$ this does not hold.
Therefore as long as $L\ge 8$ we always can apply at least one of Processes $\clubsuit, \diamondsuit$  and $\spadesuit$.

\medskip\noindent Now we consider the $L=6$ case in the light of Process $\spadesuit.$ In this case before the Process we have a length 6 and a length 4 cycle, and this will not be changed by the Process. Here we should use the Process to decrease the number of the eligible green chords.

If in $C$ there is a not-green eligible chord, then the canonical swap sequence, started with this non-green chord works. So we may assume that all three eligible chords in $C$ are green. As we saw in Proposition \ref{th:noD}  between $C$ and any other long (that is length 6) cycle $D$ there are at least 2 green edges to deny Process $\spadesuit$. We have $Z/6$ cycles to play the role of $D$ therefore it requires $Z/3$ green edges between $C$ and the long cycles, furthermore 3 green eligible chords in $C$. (However we discard this 6 edges from our calculation.) We have
\begin{align}\label{eq:hat}
\frac{n-Z-6}{4} + \frac{Z}{3} &\le 6\Delta \nonumber\\[2pt]
n + \frac{Z}{3} &\le 24\Delta,
\end{align}
which contradict to our assumption $\Delta \le n/\bound$. So Process $\spadesuit$ (i) eliminates all eligible green edges from the cycles of length 6, which finishes the proof of Lemma~\ref{th:disjoint}. \qed

\begin{lemma}\label{th:non-disjoint}
Let $(G,\I)$ and $(G,\J)$ be two Kundu realizations of $\chi$. Then there exists a K-swap sequence between the two realizations such that $G$ does not change at any step of the process.
\end{lemma}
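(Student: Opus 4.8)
The plan is to obtain this as a consequence of Lemma~\ref{th:disjoint}. Since now $\I$ and $\J$ need not be edge-disjoint, I would first pass through a common ``transit'' Kundu realization. Concretely, I would choose a 1-factor $\mathcal{K}$ on $V$ that is edge-disjoint from $E(G)$, from $\I$, and from $\J$ simultaneously. Then $\mathcal{K}$, being a 1-factor edge-disjoint from the realization $G$ of $\pi$, makes $(G,\mathcal{K})$ a $(\pi,\mathbf{1})$-packing, i.e.\ a Kundu realization of $\chi$ whose displayed 1-factor is $\mathcal{K}$; moreover the maximum-degree hypothesis required by Lemma~\ref{th:disjoint} holds for this pair since it only concerns $\pi$. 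Applying Lemma~\ref{th:disjoint} to the edge-disjoint pair $(G,\I),(G,\mathcal{K})$, and then to the edge-disjoint pair $(G,\mathcal{K}),(G,\J)$, yields two K-swap sequences, each of which leaves $G$ unchanged; their concatenation is a K-swap sequence from $(G,\I)$ to $(G,\J)$ with $G$ fixed throughout, which is exactly the assertion.

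It remains to produce $\mathcal{K}$. I would let $H$ be the graph on $V$ with edge set $E(G)\cup\I\cup\J$. Every vertex has $H$-degree at most $\Delta+2$, so the complement $\overline H$ has minimum degree at least $(n-1)-(\Delta+2)=n-\Delta-3$. Because $\Delta\le n/\bound$, this is at least $\tfrac{23}{24}n-3$, which exceeds $n/2$ whenever $n\ge 8$ (and if $\Delta\ge 1$ the hypothesis already forces $n\ge\bound$; the degenerate case $\Delta=0$, in which $G$ is empty, is disposed of the same way or by inspection for the few small even $n$). Since $n=|V|$ is even, Dirac's theorem gives a Hamilton cycle in $\overline H$, and taking every second edge of that cycle produces a perfect matching $\mathcal{K}$ of $\overline H$. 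By construction $\mathcal{K}$ shares no edge with $E(G)$, $\I$, or $\J$, precisely as needed above.

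The point worth stressing is that essentially all of the combinatorial work is already carried out in Lemma~\ref{th:disjoint}; the only genuinely new ingredient is the existence of the common transit 1-factor $\mathcal{K}$, and that is a routine perfect-matching argument in a graph of very large minimum degree. In fact it needs only $\Delta\le n/2-3$, far weaker than the bound $\Delta\le n/\bound$ that Lemma~\ref{th:disjoint} already imposes, so no extra restriction on $\pi$ enters at this step. Thus there is no real obstacle here beyond invoking the previous lemma correctly; the care is only in checking that $(G,\mathcal{K})$ is again a Kundu realization and that the two invocations of Lemma~\ref{th:disjoint} chain together without disturbing $G$.
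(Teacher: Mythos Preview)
Your proposal is correct and follows essentially the same route as the paper: pass through an auxiliary 1-factor disjoint from $G$, $\I$, and $\J$, obtained via Dirac's theorem in the complement of $G\cup\I\cup\J$, and then invoke Lemma~\ref{th:disjoint} twice. The paper's argument is identical in structure, differing only in notation and in being terser about the small-$n$ edge cases.
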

\begin{proof}
This statement is a direct consequence of Lemma~\ref{th:disjoint}. If $\I$ and $\J$ are not overlapping, then Lemma~\ref{th:disjoint} applies. If this is not the case, then consider the complement of $K = \overline{G \cup \I \cup \J}$. In $\overline{K}$, all vertex degrees are more than $\frac{\bound -1}{\bound}n-2$, which is at least $n/2$ when $n\ge 4$.  Therefore by Dirac's theorem (\cite{dirac}) there exists a Hamiltonian cycle $H$ in $K$. Taking every other edge in $H$ forms a 1-factor $\I'$, which is disjoint from $G$, $\I$ and $\J$. Now Lemma~\ref{th:disjoint} clearly applies for $G$, $\I$ and $\I'$, providing a K-swap sequence that changing $\I$ into $\I'$ without changing $G$. A second application of Lemma~\ref{th:disjoint} transforms $(G,\I')$ into $(G, \J)$.
\end{proof}

\begin{lemma}\label{th:moving}
Let $(G_1,\I)$ and $(G_2,\J)$ two Kundu realizations. Then there is a K-swap sequence transforming the first one into the second one.
\end{lemma}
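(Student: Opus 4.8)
The plan is to bootstrap from Lemma~\ref{th:non-disjoint}, which already handles the case $G_1=G_2$. First I would fix, by Petersen's theorem (\cite{pet}), a sequence of ordinary swap operations $s_1,\dots,s_k$ transforming the $\pi$-realization $G_1=:H_0$ into $G_2=:H_k$, where $H_i$ is obtained from $H_{i-1}$ by $s_i$. Every $H_i$ realizes $\pi$, hence has maximum degree at most $\Delta\le n/\bound$. The idea is then to run this sequence on the Kundu side: before performing the swap $s_i$ we use Lemma~\ref{th:non-disjoint} to replace the current displayed 1-factor by one that renders $s_i$ a legal K-swap.

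The key sub-claim is: if $H$ is any realization of $\pi$ and $s\colon ab,cd\Rightarrow bc,ad$ is an ordinary swap valid in $H$ (so $a,b,c,d$ are distinct, $ab,cd\in E(H)$, $bc,ad\notin E(H)$), then there is a 1-factor $M$ edge-disjoint from $H$ that matches each of $a,b,c,d$ to a distinct vertex of $V\setminus\{a,b,c,d\}$. Indeed, since $\pi=\mathbf 0$ is trivial we may assume $\Delta\ge 1$, hence $n\ge\bound$; as $H$ is sparse we can greedily choose four distinct vertices in $V\setminus\{a,b,c,d\}$ that are non-neighbours of $a$, $b$, $c$, $d$ respectively, and then apply Dirac's theorem (\cite{dirac}) to the remaining $n-8$ vertices (whose mutual non-edges in $H$ induce minimum degree exceeding $(n-8)/2$, using $\Delta\le n/\bound$) to obtain a 1-factor on them; the union with the four chosen edges is $M$. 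For such an $M$, the graph $H\cup M$ is a Kundu realization of $\chi$ with displayed 1-factor $M$, and since none of $ab,cd,bc,ad$ lies in $M$, the swap $s$ performed in $H\cup M$ satisfies $\{ab,cd\}\cap M=\emptyset$ and adds no edge of $M$; hence $s$ is a K-swap that leaves $M$ as the displayed 1-factor and takes $(H,M)$ to $(H',M)$, where $H'$ is obtained from $H$ by $s$.

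Now I would assemble the transformation. For $i=1,\dots,k$ pick, using the sub-claim for $s_i$ and $H_{i-1}$, a 1-factor $M_i$ edge-disjoint from $H_{i-1}$ and transversal to the four vertices of $s_i$. Starting from $(G_1,\I)=(H_0,\I)$, apply Lemma~\ref{th:non-disjoint} to pass, with $H_0$ held fixed, to $(H_0,M_1)$; perform the K-swap $s_1$ to reach $(H_1,M_1)$; apply Lemma~\ref{th:non-disjoint} to reach $(H_1,M_2)$; perform $s_2$; and so on, ending at $(H_k,M_k)=(G_2,M_k)$. A final application of Lemma~\ref{th:non-disjoint} transforms $(G_2,M_k)$ into $(G_2,\J)$. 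Concatenating these finitely many K-swap sequences yields the desired K-swap sequence from $(G_1,\I)$ to $(G_2,\J)$.

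The only genuinely delicate point is the sub-claim, namely producing at each step an auxiliary 1-factor that simultaneously avoids the current $\pi$-realization and is transversal to the four vertices of the upcoming swap; this is exactly where the hypothesis $\Delta\le n/\bound$ enters (through sparsity of the $H_i$ together with Dirac's theorem). Everything else is bookkeeping layered on top of Lemma~\ref{th:non-disjoint}, and in particular no new use of the cycle-decomposition machinery (Theorems~\ref{th:decomp} and~\ref{th:swap-series}) is needed here.
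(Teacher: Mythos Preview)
Your proof is correct and follows the same skeleton as the paper's: take a Petersen swap sequence $G_1\to G_2$, pre-adjust the displayed 1-factor so that each ordinary swap becomes a K-swap, and finish with Lemma~\ref{th:non-disjoint}. The difference is only in how the pre-adjustment is done. You rebuild an entire auxiliary 1-factor $M_i$ at every step (via a greedy choice on the four swap vertices plus Dirac on the remaining $n-8$) and then invoke the full Lemma~\ref{th:non-disjoint} to move to it; the paper instead observes that at most one of the two incoming non-edges $\varepsilon,\varphi$ can lie in the current 1-factor, and removes that single obstruction with one local ``swap-out'' K-swap $\varepsilon,\sigma\Rightarrow\cdots$, where $\sigma$ is any 1-factor edge with no $G'$-edge to $\varepsilon$ (such $\sigma$ exists by the count $2\Delta\le n/12$). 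So your argument is sound but heavier: each step costs an entire run of Lemma~\ref{th:non-disjoint} (hence of Lemma~\ref{th:disjoint}) where the paper spends a single extra K-swap.
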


\begin{proof}
First we will transform $(G_1,\I)$ into a Kundu realization $(G_2,\I')$. Then application of Lemma~\ref{th:non-disjoint} to $(G_2,\I')$ to obtain $(G_2,\J)$ finishes the proof.

Consider a swap sequence which transforms $G_1$ into $G_2$. This sequence is not necessarily a K-swap sequence. Consider the swap operation $S  = e,f \Rightarrow \varepsilon, \varphi$ transforming $G'$ into $G''$. The edges $e$ and $f$ belong to the graph $G'$. If both edges $\varepsilon, \varphi$ are outside of $\I'$, or both belong to $\I'$, then the swap is automatically a K-swap.
We face a problem, if, say, $\varepsilon$ is in $\I'$ but $\varphi$ is not.

Before we execute the swap $S$ we will ``swap out" edge $\varepsilon$ from the 1-factor. To do that we need another edge $\sigma$ from $\I'$ such that there is no green edge (edge from $G'$) between $\varepsilon$ and the newly chosen red edge $\sigma$. This can be done easily since the joint neighborhood of the two end vertices of edge $\varepsilon$ has no more than $2 \Delta \le n/12 $ vertices. That many vertices cannot cover more than $n/6$  red edges from $\I'$. Picking up edge $\sigma$ which is not covered by these vertices provides the swapping out operation. After that we can proceed with $S$, which is now a K-swap.
\end{proof}
\noindent This lemma is just a rewording of Theorem~\ref{th:transform} what is now proved. \qed \hspace{-12pt} \qed

It is worth noting that the ``swap out" option also can be used in the proof of Lemma~\ref{th:non-disjoint} instead of Dirac's theorem.

\medskip\noindent
We are ready now to prove Theorem~\ref{th:main}: Let $(G,\I)$ be a Kundu realization of $\pi+\mathbf{1}$. Furthermore let $\J$ be a given 1-factor. If $\J$ is disjoint from $G$, then we are done.

So assume that edge $e \in \J$ is a green edge of $G$.  Then we will swap out that green edge from $G$. This is actually the same procedure that was used in the proof of Lemma~\ref{th:moving}. We have to find another green edge $f$ in $G\setminus \J$ such that there is no red edges from $\I$ or $\J$ between these $e$ and $f$. Since no (other) edge from $\J$ touches edge $e,$ therefore $f$ should not belong to $\J$ and must avoid red edges from $\I$ which also incident with $e$. But for that end the same enumeration applies. This finishes the proof of Theorem~\ref{th:main}. \qed

\medskip\noindent It did not escape our attention that if degree sequences $\pi_1$ and $\pi_2$ pack - where the maximum degree in $\pi_2$ is small (say $\pi_2 \le \mathbf{4})$ - then the majority of the operations and reasoning can be applied quite easily to the degree sequence $\pi_1 + \pi_2$. However there may be some problem to handle the alternating cycle decomposition of the symmetric difference of two edge-disjoint realizations of $\pi_2$. However we believe that for packing a degree sequence with a relatively small second sequence similar results apply.

\section*{\refname}  

\end{document}